\def\authorsPS{}
\newcommand\authPS[8]{\ifnum\authPSc<1\def\authPSc{1}\else, \fi{\large\sffamily #1 #2}%
\edef\authorsPS{\authorsPS \par\vskip 1mm\noindent #1 #2:\hskip 5mm #3, #4, #5, #6, #7, #8}}
\newtheorem{theorem}{Theorem}[section]
\newtheorem{lemma}[theorem]{Lemma}
\theoremstyle{definition}
\theoremstyle{remark}
\newtheorem{remark}[theorem]{Remark}
\newtheorem{example}[theorem]{Example}
\def\authPSc{0}
\newcommand{\beq}[1]{
\begin{equation}\label{#1}}
\newcommand{\eeq}{\end{equation}}
\newcommand{\req}[1]{{\rm(\ref{#1})}}
\newcommand{\hten}{{\mathfrak H}}
\newcommand{\law}{\stackrel{\cal L}{\longrightarrow}}
\newcommand{\Nt}{\lfloor nt \rfloor}
\newcommand{\DXj}{\Delta X_{\frac jn}}
\begin{document}



\title{Central limit theorem for functionals of a generalized self-similar Gaussian process}

\author{Daniel Harnett\thanks{
Department of Mathematical Sciences, University of Wisconsin Stevens Point \newline Stevens Point, Wisconsin 54481, dharnett@uwsp.edu}
\, and David Nualart\thanks{  
Department of Mathematics, University of Kansas\newline  
\; 405 Snow Hall, Lawrence, Kansas 66045-2142, nualart@ku.edu \newline D. Nualart is supported by NSF grant DMS1512891 and the ARO grant FED0070445 \newline
 {\bf Keywords}:  Central limit theorem, Breuer-Major theorem, Fourth Moment theorem, self-similar processes.  \newline
{\bf AMS 2010 Classification}:  60F05, 60G18, 60H07}  
}
\date{}
\maketitle

\begin{abstract}
We consider a class of self-similar, continuous Gaussian processes that do not necessarily have stationary increments.  We prove a version of the Breuer-Major theorem for this class, that is, subject to conditions on the covariance function, a generic functional of the process increments converges in law to a Gaussian random variable.  The proof is based on the Fourth Moment Theorem.  We give examples of five non-stationary processes that satisfy these conditions.   
\end{abstract}
\large

\section{Introduction}
We consider a centered Gaussian process $X = \{ X_t, t\ge 0\}$ that is self-similar of order $\beta \in (0,1)$.  That is,  the process $\{a^{-\beta} X_{a t}, t\ge 0 \}$  has the same distribution as the process $X$ for any $a>0$. Consider the function  $\phi: [1,\infty) \rightarrow \mathbb{R}$ given by
\begin{equation} \label{phi}
\phi(x)= \mathbb{E} [X_1X_x].
\end{equation}
This function characterizes the  {\em covariance function}. Indeed, for $0 < s\le t$, we have
\[ R(s,t) = {\mathbb E}\left[ X_s X_t\right] = s^{2\beta}{\mathbb E}\left[ X_1 X_{t/s}\right] = s^{2\beta} \phi\left( \frac ts\right).\]

The best known self-similar Gaussian process is the fractional Brownian motion (fBm), where
\[R(s,t) = \frac 12\left( s^{2H} +t^{2H} - |t-s|^{2H}\right),\]
and the self-similarity exponent $\beta$ is the Hurst parameter  $H \in (0,1)$.  For the fBm, 
\[\phi(x) = \frac 12\left( 1+ x^{2H} - (x-1)^{2H}\right),\; x\ge 1.\]

We will  impose conditions on the function $\phi$ such that
${\mathbb E}\left[ (X_{t+s} - X_t)^2\right] \sim s^{\alpha}$ as $s\to 0$, for some constant $\alpha \in (0,2\beta]$ that we call  the {\em increment exponent}.
For example, $\alpha = 2H$ for the fBm, since ${\mathbb E}\left[ (X_{t+s} - X_t)^2\right] = s^{2H}$. However, there are examples where $\alpha <2\beta$.

Our goal in this paper is to identify a set of conditions on $\alpha, \beta$ and $\phi$, such that we can establish a central limit theorem  for  functionals of the increments of $X$.  More precisely,   let $\gamma= {\cal N}(0,1)$ and consider a function $f\in L^2(\mathbb{R}, \gamma)$,  which has an expansion of the form
\begin{equation}    \label{chaos}
f(x) = \sum_{q=d}^\infty c_q H_q(x), 
\end{equation} 
where $d\ge 1$,  $c_d \not =0$, and   $H_q(x)$ is the $q$th Hermite polynomial, defined as
\[ 
H_q(x) = (-1)^q e^{\frac{x^2}{2}} \frac{d^q}{dx^q} e^{-\frac{x^2}{2}}.
\]
The index $d$ is called the {\it Hermite rank } of $f$.  

For  integers $n\ge 2$ and  $j \ge 0$ define
\[
\DXj = X_{\frac{j+1}{n}} - X_{\frac jn}\qquad\text{ and}\quad Y_{j,n}=  \frac{\DXj}{\| \DXj\|_{L^2(\Omega)}}.
\]
We consider the stochastic process defined by
\begin{equation} \label{hv}
F_n(t) = \frac 1{\sqrt{n}}  \sum_{j=0}^{ \Nt -1} f(  Y_{j,n}), \quad  \Nt \ge 1,
\end{equation}
and $F_n(t)=0$ if $\Nt <1$. 
 It is well known that  if the process $X$ has stationary increments, the convergence to a normal law for the sequence of random variables $F_n(t)$ can be deduced from the following central limit theorem proved by Breuer and Major  in \cite{BreuerMajor}.

\begin{theorem}
Suppose $\{ Y_j,  j\ge 1\}$ is a  centered stationary Gaussian sequence with unit variance, and denote by $\rho(k) =\mathbb{E}(Y_n Y_{n+k})$ its covariance function.  Consider a function $f\in L^2(\mathbb{R}, \gamma)$  with   Hermite rank $d\ge 1$.  Then the functional
\beq{Breuer_Major_classic} V_n = \frac{1}{\sqrt n} \sum_{j=1}^n f(Y_j)\eeq
converges in distribution to a  normal  law ${\cal N}(0, \sigma^2)$ as $n$ tends to infinity, provided $\sum_{k\in \mathbb{Z}} |\rho(k)|^d <\infty$, and  in this case $\sigma^2 =  \sum_{q=d}^\infty  q! c_q^2\sum_{k\in \mathbb{Z}} \rho(k)^q$.
\end{theorem}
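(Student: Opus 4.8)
The plan is to realize the sequence $\{Y_j\}$ inside a fixed Gaussian Hilbert space and carry out the analysis chaos by chaos, using the Fourth Moment Theorem as the main engine. Concretely, I would introduce an isonormal Gaussian process $\{W(h),\, h\in\hten\}$ over a real separable Hilbert space $\hten$, together with unit vectors $e_j\in\hten$ satisfying $\langle e_i,e_j\rangle_{\hten}=\rho(i-j)$, so that $Y_j=W(e_j)$ in distribution. The key identity $H_q(W(e_j))=I_q(e_j^{\otimes q})$, relating Hermite polynomials to multiple Wiener--It\^o integrals, then lets me write the $q$th chaos component of $V_n$ as $F_{q,n}=I_q(f_{q,n})$ with $f_{q,n}=\frac{c_q}{\sqrt n}\sum_{j=1}^n e_j^{\otimes q}$.

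First I would handle a single chaos. A direct expansion gives $\mathbb{E}[F_{q,n}^2]=q!\,c_q^2\sum_{|k|<n}(1-|k|/n)\rho(k)^q$, which converges to $q!\,c_q^2\sum_{k\in\mathbb{Z}}\rho(k)^q$ by dominated convergence, since $|\rho(k)|\le\rho(0)=1$ forces $|\rho(k)|^q\le|\rho(k)|^d$ for $q\ge d$ and the hypothesis $\sum_k|\rho(k)|^d<\infty$ supplies a summable majorant. To upgrade this $L^2$ control to a central limit theorem I would invoke the Fourth Moment Theorem: it suffices to show that every contraction norm $\|f_{q,n}\otimes_r f_{q,n}\|_{\hten^{\otimes 2(q-r)}}$ tends to $0$ for $r=1,\ldots,q-1$.

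The contraction estimates are where I expect the real work to lie. Expanding such a norm produces a quadruple sum of products of four powers of $\rho$ indexed by differences of the four time points, and the task is to show this sum is $o(n^2)$. The standard route is to bound it by a convolution of the sequence $(|\rho(k)|^d)_k$ with itself and to apply Young's inequality (or a generalized H\"older argument), again exploiting $|\rho|^q\le|\rho|^d$ and the fact that $\sum_k|\rho(k)|^d<\infty$ makes $(|\rho(k)|^d)_k$ an $\ell^1$ sequence; finiteness of the relevant $\ell^p$ norms then forces the contractions to vanish. Getting the combinatorics of the index shifts right, and verifying that every term genuinely carries enough factors of $\rho$ to be summable, is the delicate point of the whole argument.

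Finally I would assemble the pieces. For a fixed truncation level $N$, the vector $(F_{d,n},\ldots,F_{N,n})$ lives in distinct orthogonal chaoses, so the multivariate Fourth Moment (Peccati--Tudor) theorem upgrades the individual limit theorems and the convergence of the covariance matrix (diagonal in the limit) to joint convergence toward an independent Gaussian vector; summing the components gives a normal limit for $V_n^{(N)}=\frac1{\sqrt n}\sum_{j=1}^n\sum_{q=d}^N c_q H_q(Y_j)$ with variance $\sum_{q=d}^N q!\,c_q^2\sum_k\rho(k)^q$. To pass to the full series I would control the tail uniformly in $n$: since $\mathbb{E}[(V_n-V_n^{(N)})^2]\le\big(\sum_k|\rho(k)|^d\big)\sum_{q>N}q!\,c_q^2$ and $\sum_q q!\,c_q^2=\|f\|_{L^2(\gamma)}^2<\infty$, the remainder is small uniformly in $n$ as $N\to\infty$. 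A standard approximation lemma (convergence in law of the truncations together with this uniform $L^2$ tail bound) then yields convergence of $V_n$ to $\mathcal{N}(0,\sigma^2)$ with the stated variance.
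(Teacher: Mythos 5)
First, a point of orientation: the paper never proves this statement --- it is quoted as the classical Breuer--Major theorem, with references to \cite{BreuerMajor} and to Theorem 7.4.1 of \cite{NoP11} --- so the natural comparison is with the paper's proof of its own Theorem \ref{thm3}, which follows precisely the architecture you propose (isonormal realization, chaos-by-chaos application of the Fourth Moment Theorem, Theorem \ref{Vec4MT} for joint convergence, and a truncation of the chaos expansion with a uniform tail bound). Your realization $Y_j=W(e_j)$, the variance computation with dominated convergence (majorant $|\rho(k)|^d$, using $|\rho(k)|\le\rho(0)=1$), and the final assembly via the uniform estimate $\mathbb{E}[(V_n-V_n^{(N)})^2]\le\bigl(\sum_k|\rho(k)|^d\bigr)\sum_{q>N}q!\,c_q^2$ are all correct.

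The gap is in the contraction step, and it is genuine. Writing
\[
\left\| f_{q,n}\otimes_r f_{q,n}\right\|^2_{\hten^{\otimes 2(q-r)}}
=\frac{c_q^4}{n^2}\sum_{i,j,k,l=1}^n \rho(i-j)^r\,\rho(k-l)^r\,\rho(i-k)^{q-r}\,\rho(j-l)^{q-r},
\]
the tools you invoke --- Young's inequality for a convolution, or a direct H\"older bound, using only $\sum_k|\rho(k)|^d<\infty$ --- yield \emph{boundedness} of this quantity, not convergence to zero, exactly in the critical range of chaos orders. Indeed, substituting differences reduces the problem to showing $\frac1n\| b\star c\|_{\ell^2}^2\to 0$ with $b_m=|\rho(m)|^r\mathbf{1}_{\{|m|<n\}}$ and $c_m=|\rho(m)|^{q-r}\mathbf{1}_{\{|m|<n\}}$; for the Young exponents $1/p+1/s=3/2$ one needs $rp\ge d$ and $(q-r)s\ge d$ to get norms finite uniformly in $n$, and these constraints force $q\ge \frac32 d$. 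For $d\le q<\frac32 d$ --- in particular for the leading chaos $q=d$, which is always present since $c_d\neq0$ --- the best these inequalities give is $O(1)$: e.g.\ H\"older with exponents $q/r$ and $q/(q-r)$ applied to the full quadruple sum produces $n^2\bigl(\sum_{|m|<n}|\rho(m)|^q\bigr)^2$, which after division by $n^2$ is merely bounded. (Note also that the sequences actually appearing carry the powers $r$ and $q-r$, both possibly smaller than $d$, so they need not be $\ell^1$; the $\ell^1$ sequence $(|\rho(k)|^d)_k$ never shows up as such in the convolution.) The missing idea is the two-scale splitting that the paper uses for condition (c) in the proof of Theorem \ref{thm3}, following \cite{DaNoNu}: fix $M$, split the index set into the region where the gaps $|i-j|$, $|k-l|$, $|i-k|$ are all at most $M$ --- whose contribution is at most $CM^3/n$ by Cauchy--Schwarz and counting --- and the regions where one gap exceeds $M$, where H\"older with exponents $q/r$, $q/(q-r)$ pulls out the factor $\bigl(\sum_{|m|>M}|\rho(m)|^q\bigr)^{r/q}$, uniform in $n$ and tending to zero as $M\to\infty$; one then lets $n\to\infty$ first and $M\to\infty$ second. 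Without this splitting (or an equivalent device), your argument does not close at $q=d$, and hence does not prove the theorem.
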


For example, in the case of the fBm,  the sequence $\{Y_{j,n},    0\le j \le n-1\}$ defined by
\[
Y_{j,n}= n^{H}\left(B^H_{\frac{j+1}{n}} - B^H_{\frac jn}\right),
\]
is stationary, and  the Breuer-Major theorem implies that if $d\ge 2$   and $H < 1-\frac{1}{2d}$, the sequence of random variables
\begin{equation} 
 \frac 1{\sqrt{n}}  \sum_{j=0}^{n-1} f\left( n^H(B^H_{\frac{j+1}{n}}-B^H_{\frac jn})\right)
\end{equation}
converges in law as $n$ tends to infinity to a Gaussian random variable, with mean zero and variance given by
\beq{Variance_fBm} \sigma^2 =  \sum_{q=d} ^\infty c_q^2 \frac{q!}{2^q} \sum_{m\in{\mathbb Z}} \left( |m+1|^{2H}-2|m|^{2H}+|m-1|^{2H}\right)^q.\eeq
See \cite{BreuerMajor} and Theorem 7.4.1 of \cite{NoP11}.

 Our main result (see Theorem  3.4) says that if   the covariance of $X$ satisfies certain conditions and  the increment exponent $\alpha$ satisfies $0< \alpha < 2-\frac 1d$, then the finite dimensional distributions of processes $\{F_n(t), t\ge 0\}$ defined in 
 (\ref{hv}) converge in law to those of  a Brownian motion with scaling given by 
 (\ref{Variance_fBm}), where $2H$ is replaced by $\alpha$.  Notice that the sequence  of scaled increments  $\{Y_{j,n},   j\ge 0\}$ is not necessarily stationary and we cannot deduce this result form the Breuer-Major theorem.  On the other hand, the relevant parameter in the limit theorem is the increment exponent $\alpha$, instead of the self-similarity parameter $\beta$.
 
The convergence in law of the finite-dimensional distributions in Theorem 3.4 follows from the Fourth Moment Theorem \cite{NOrtiz, NP05}, which represents a drastic simplification of the method of moments to show the convergence to a normal distribution.   In order to establish convergence, it is sufficient to show the convergence of the variances,  and that a condition involving the contraction operator  is satisfied. In this paper, we use extended versions of the Fourth Moment Theorem \cite{HuNu, PT}.

For the particular case of a single Hermite polynomial, that is $f = H_q$ for some $q\ge 2$, one can show  the convergence in total variation of the marginal distributions and a functional central limit theorem (see Section 3.4).

We show examples of known processes that satisfy the required conditions, including:
\begin{enumerate}[(a)]
\item  bifractional Brownian motion (see \cite{Houdre}),
\item subfractional Brownian motion (see \cite{Bojdecki}), 
\item an `arcsine' Gaussian process introduced in a paper by Jason Swanson \cite{Swanson07}, and 
\item  two self-similar Gaussian processes that form the decomposition of a process discussed in a paper by Durieu and Wang \cite{DuWa}.  \end{enumerate}
In examples (a), (b) and (d), the self-similarity and increment exponents are the same, that is, $\alpha = 2\beta$.  This is not true in example (c),  where $\alpha < 2\beta$.

The outline of this paper is as follows.  In Section 2, we give definitions and background needed to use the Fourth Moment Theorem. In Section 3, we introduce a set of covariance conditions on $X$, and  we state Theorem \ref{thm3}, which shows that the finite dimensional distributions of $F_n$ converge in law when these conditions are met.  Some particular applications of Theorem \ref{thm3} are discussed.  Section 4 discusses the examples (a) - (d) above, and Section 5 contains the proofs of two technical lemmas.

\subsection{Acknowledgement}  The authors wish to thank an anonymous referee for helpful comments, especially regarding the proof of Theorem \ref{thm3}.

\section{Theoretical background}
Following is a brief description of some identities that will be used.  The reader may refer to \cite{NoP11, Nualart} for detailed coverage of this topic.  
Let $Z = \{ Z(h), h\in\cal{H}\}$ be an {\em isonormal Gaussian process} on a probability space $( \Omega, {\cal F}, P )$,  indexed by a real separable Hilbert space $\cal{H}$.  That is, $Z$ is a family of Gaussian random variables such that ${\mathbb E}[ Z(h)] =0$ and ${\mathbb E}\left[Z(h)Z(g)\right] = \left< h,g\right>_{\cal{H}}$ for all $h,g \in\cal{H}$.  We will always assume that $\cal F$ is the filtration generated by $Z$. 
  
For integers $q \ge 1$, let ${\cal H}^{\otimes q}$ denote the $q$th tensor product of ${\cal H}$, and let ${\cal H}^{\odot q}$ denote the subspace of symmetric elements of ${\cal H}^{\otimes q}$.

Let $\{ e_n, n\ge 1\}$ be a complete orthormal system in ${\cal H}$.  For functions $f, g \in {\cal H}^{\odot q}$ and $p\in\{1, \dots, q\}$, we define the $p$th-order contraction of $f$ and $g$ as that element of ${\cal H}^{\otimes 2(q-p)}$ given by
\beq{contract} f\otimes_p g = \sum_{i_1, \dots , i_p=1}^\infty \left< f, e_{i_1} \otimes \cdots\otimes e_{i_p}\right>_{{\cal H}^{\otimes p}} \otimes \left< g, e_{i_1} \otimes \cdots\otimes e_{i_p}\right>_{{\cal H}^{\otimes p}},\eeq
where $f\otimes_0 g = f\otimes g$ by definition and, if $f,g \in {\cal H}^{\odot q}$,  $f\otimes_q g = \left< f,g\right>_{{\cal H}^{\otimes q}}$.  In particular, if $f,g$ are symmetric functions in ${\cal H}^{\otimes 2} = L^2({\mathbb R}^2, {\cal B}^2, \mu^2)$ for a  measure $\mu$, then we have
\beq{contract_integrl} f \otimes_1 g = \int_{\mathbb R} f(s, t_1) g(s, t_2)~\mu(ds).\eeq

Let ${\cal H}_q$ be the $q$th Wiener chaos of $Z$, that is, the closed linear subspace of $L^2(\Omega)$ generated by the random variables $\{ H_q(Z(h)), h \in {\cal H}, \|h \|_{\cal H} = 1 \}$, where $H_q(x)$ is the $q$th Hermite polynomial.  It can be shown (see \cite{NoP11}, Proposition 2.2.1) that if $Z, Y \sim {\cal N}(0,1)$ are jointly Gaussian, then
\begin{equation}  
\label{Herm_cov} 
{\mathbb E}\left[ H_p(Z) H_q(Y)\right] = 
\begin{cases}
p!\left({\mathbb E}\left[ ZY\right]\right)^p & {\rm if}\; p=q\\0&{\rm otherwise}
\end{cases}.
\end{equation}
For $q \ge 1$, it is known that the map 
\beq{Hmap} I_q(h^{\otimes q}) = H_q(Z(h))\eeq
provides a linear isometry between ${\cal H}^{\odot q}$ (equipped with the modified norm $\sqrt{q!}\| \cdot\|_{{\cal H}^{\otimes q}}$) and ${\cal H}_q$. The random variable $I_q(\cdot)$ is the generalized Wiener-It\^o stochastic integral (see \cite{NoP11}, Theorem 2.2.7).  By convention, ${\cal H}_0 = \mathbb{R}$ and $I_0(x) = x$.  

It is well known that $L^2(\Omega)$ can be decomposed into an orthogonal sum of the spaces ${\cal H}_q$.  Hence, any $F\in L^2(\Omega)$ has a {\em Wiener chaos expansion}
\beq{W_chaos} F = \sum_{q=0}^\infty I_q (f_q),\eeq
where $f_0 = {\mathbb E}[F]$ and the $f_q \in {\cal H}^{\odot q}$, $q\ge 1$ are uniquely determined by $F$ (see Theorem 1.1.2 of \cite{Nualart}).   

 The purpose of the above discussion is to provide sufficient background to use the Fourth Moment Theorem.   This theorem, first published in 2005, has inspired an extensive body of literature, and provided solution techniques to a new class of problems.  This first version of the theorem was proved in \cite{NP05}.  Since then, other equivalent conditions have been added  \cite{NoP11, NOrtiz}.  A key advantage of this theorem is that, unlike the method of moments, it is sufficient to check the convergence of the moments of up to order four.   

\begin{theorem}[Fourth Moment Theorem]  \label{thm1} Fix an integer $q \ge 2$.  For integers $n \ge 1$, let $F_n = I_q(f_n)$ be a sequence of random variables belonging to the $q$th Wiener chaos of $X$, so that $f_n \in {\cal H}^{\odot q}$.  Assume that ${\mathbb E}\left[ F_n^2\right] \longrightarrow \sigma^2 \ge 0$ as $n\to\infty$.  
Then the following are equivalent:
\begin{enumerate}[(a)]
\item As $n \to \infty$, $F_n$ converges in distribution to $N \sim {\cal N}(0,\sigma^2)$.
\item  $\lim_{n\to\infty}{\mathbb E}\left[ F_n^4\right] = 3\sigma^4 = {\mathbb E}\left[ N^4\right]$.
\item  For each integer $1\le r \le q-1$,
 $\lim_{n \to \infty} \left\| f_n \otimes_r f_n \right\|_{{\cal H}^{\otimes 2(q-r)}} =0$.
\end{enumerate}\end{theorem}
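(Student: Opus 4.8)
The plan is to prove the circle of implications $(a)\Rightarrow(b)\Rightarrow(c)\Rightarrow(a)$, using throughout that every $F_n=I_q(f_n)$ lives in the fixed chaos $\mathcal H_q$, where hypercontractivity of the Ornstein--Uhlenbeck semigroup makes all $L^p(\Omega)$-norms equivalent to the $L^2(\Omega)$-norm, with constants depending only on $q$. The direction $(a)\Rightarrow(b)$ is the soft one. Hypercontractivity gives $\|F_n\|_{L^{p}(\Omega)}\le C_{p,q}\|F_n\|_{L^2(\Omega)}$ for every $p$, so from $\mathbb E[F_n^2]\to\sigma^2$ we get $\sup_n\mathbb E[F_n^{4+\varepsilon}]<\infty$ for some $\varepsilon>0$; hence the family $\{F_n^4\}$ is uniformly integrable. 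Combined with the assumed convergence in law $F_n\Rightarrow N$, this upgrades to convergence of fourth moments, $\mathbb E[F_n^4]\to\mathbb E[N^4]=3\sigma^4$, which is $(b)$.

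The equivalence $(b)\Leftrightarrow(c)$ is the computational core. Using the product formula for multiple integrals together with the isometry $\mathbb E[I_m(g)I_m(h)]=m!\,\langle g,h\rangle_{\mathcal H^{\otimes m}}$, I would expand $\mathbb E[F_n^4]=\mathbb E[(I_q(f_n)^2)^2]$ by first writing
\[
I_q(f_n)^2=\sum_{r=0}^{q} r!\binom{q}{r}^2 I_{2q-2r}\bigl(\widetilde{f_n\otimes_r f_n}\bigr),
\]
where $\widetilde{\,\cdot\,}$ denotes symmetrization, and then taking the second moment; orthogonality across distinct chaoses leaves only the diagonal terms, giving
\[
\mathbb E[F_n^4]=\sum_{r=0}^{q}\Bigl(r!\binom{q}{r}^2\Bigr)^2 (2q-2r)!\,\bigl\|\widetilde{f_n\otimes_r f_n}\bigr\|^2 .
\]
Isolating the extremal indices $r=0$ and $r=q$ and expanding $\|\widetilde{f_n\otimes f_n}\|^2$ into its block-separated and cross contributions shows that these terms produce exactly $3(\mathbb E[F_n^2])^2$. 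Consequently the fourth cumulant $\kappa_4(F_n)=\mathbb E[F_n^4]-3(\mathbb E[F_n^2])^2$ is a strictly positive linear combination of the norms $\|\widetilde{f_n\otimes_r f_n}\|^2$ for $1\le r\le q-1$. Since $\mathbb E[F_n^2]\to\sigma^2$, condition $(b)$ is equivalent to $\kappa_4(F_n)\to 0$, and by positivity this holds iff every symmetrized contraction tends to $0$. The elementary bound $\|\widetilde{f_n\otimes_r f_n}\|\le\|f_n\otimes_r f_n\|$, together with the reverse control of $\|f_n\otimes_r f_n\|$ by the symmetrized norms and by $\|f_n\otimes_{q-r}f_n\|$, shows that this is in turn equivalent to $\|f_n\otimes_r f_n\|\to 0$ for all $r$, i.e.\ to $(c)$.

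For the hard direction $(c)\Rightarrow(a)$ I would invoke the Malliavin--Stein method. The Malliavin derivative of $F_n=I_q(f_n)$ satisfies $\tfrac1q\mathbb E[\|DF_n\|_{\mathcal H}^2]=\mathbb E[F_n^2]$, and the fundamental estimate of Nourdin--Peccati gives
\[
d_{TV}\bigl(F_n,\,N\bigr)\le C\,\sqrt{\mathrm{Var}\Bigl(\tfrac1q\,\|DF_n\|_{\mathcal H}^2\Bigr)},
\]
with $C$ depending only on $q$ and $\sigma$. A direct computation of this variance via the product formula expresses it as a positive combination of $\|f_n\otimes_r f_n\|^2$, $1\le r\le q-1$, up to a term that vanishes because $\mathbb E[F_n^2]\to\sigma^2$. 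Hence $(c)$ forces $d_{TV}(F_n,N)\to 0$, yielding convergence in distribution (indeed in total variation). As an alternative route one can argue by the method of moments: a diagram/cumulant induction shows that each cumulant $\kappa_p(F_n)$ with $p\ge 3$ is a finite sum of terms every one of which contains at least one factor $\|f_n\otimes_r f_n\|$ with $1\le r\le q-1$; since $\|f_n\|$ stays bounded, $(c)$ drives all higher cumulants to $0$, and hypercontractivity justifies passing from convergence of cumulants to convergence in law.

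I expect the step $(c)\Rightarrow(a)$ to be the main obstacle, since this is where essentially all the depth of the theorem resides: both routes require genuine quantitative machinery — either the Stein--Malliavin integration-by-parts bound on $d_{TV}$ together with the accompanying variance estimate, or the combinatorial control of arbitrarily high cumulants. By contrast $(a)\Rightarrow(b)$ is purely soft, and $(b)\Leftrightarrow(c)$, while delicate, is a finite and explicit computation.
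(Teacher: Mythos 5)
The paper never proves Theorem \ref{thm1}: it is stated purely as background, with the proof attributed to \cite{NP05} and, for the additional equivalences, to \cite{NOrtiz} and \cite{NoP11}. So your attempt can only be compared with the literature, not with an in-paper argument. In structure, your outline is the modern proof (essentially Theorem 5.2.7 of \cite{NoP11}): (a)$\Rightarrow$(b) by fixed-chaos hypercontractivity plus uniform integrability; (b)$\Leftrightarrow$(c) by the multiplication formula for multiple integrals; (c)$\Rightarrow$(a) by the Stein--Malliavin total-variation bound together with the expression of $\mathrm{Var}\left(q^{-1}\|DF_n\|^2_{\mathcal H}\right)$ as a positive combination of symmetrized contraction norms --- exactly the two ingredients this paper quotes later, in the $f=H_q$ example of Section 3.4, from \cite{NoP09} and \cite[Lemma 5.2.4]{NoP11}. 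Note this is not the historical proof of \cite{NP05}, which went through the Dambis--Dubins--Schwarz theorem and stochastic calculus; your route is the now-standard one and yields convergence in total variation as a bonus. One small loose end: the Stein bound carries $\sigma^2$ in the denominator, so the degenerate case $\sigma^2=0$ should be dispatched separately (Chebyshev suffices, since $\mathbb E[F_n^2]\to 0$).

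There is, however, one step stated incorrectly, and it happens to sit at the hinge of (b)$\Rightarrow$(c). The $r=0$ and $r=q$ terms of the fourth-moment expansion do \emph{not} produce exactly $3(\mathbb E[F_n^2])^2$: the $r=q$ term gives $(q!)^2\|f_n\|^4=(\mathbb E[F_n^2])^2$, while the $r=0$ term satisfies
\[
(2q)!\,\bigl\|\widetilde{f_n\otimes f_n}\bigr\|^2
=2(q!)^2\|f_n\|^4+\text{(a strictly positive linear combination of }\|f_n\otimes_r f_n\|^2,\ 1\le r\le q-1),
\]
the extra sum coming precisely from the ``cross'' pairings you allude to. Hence the fourth cumulant is a positive combination of \emph{both} the symmetrized norms $\|\widetilde{f_n\otimes_r f_n}\|^2$ and the \emph{unsymmetrized} norms $\|f_n\otimes_r f_n\|^2$, $1\le r\le q-1$. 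This is not a cosmetic point: if $\kappa_4(F_n)$ involved only symmetrized norms, as you assert, then (b) would give you only $\|\widetilde{f_n\otimes_r f_n}\|\to 0$, and the inequality $\|\widetilde{f_n\otimes_r f_n}\|\le\|f_n\otimes_r f_n\|$ runs the wrong way to conclude (c); the ``reverse control'' you invoke is not available as an elementary inequality (it is true \emph{a posteriori}, but only as a consequence of the full theorem, so using it here would be circular). With the identity written correctly, (b)$\Leftrightarrow$(c) is immediate because all terms are nonnegative; this is the standard fix, and with it your proposed circle of implications closes correctly.
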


We have this multidimensional extension due to Peccati and Tudor:
\begin{theorem}[\cite{PT}]\label{Vec4MT}For an integer $k \ge 1$,  let $F_n = (F_n^1, \dots, F_n^k)$ be a sequence of random vectors, and let $1\le q_1\le \dots \le q_k <\infty$ be integers such that for each $j = 1, \dots, k$, $F_n^j = I_{q_j}(f^j_n)$ for some kernel $f_n^j\in {\cal H}^{\odot q_j}$.  Moreover, assume that each $\mathbb{E}\left[ (F_n^j)^2\right] \longrightarrow \sigma_j^2\ge 0$ as $n$ tends to infinity, and that $\lim_{n\to\infty} \mathbb{E}\left[ F_n^j F_n^\ell\right] = 0$ for all $j \neq \ell$.  Then the following are equivalent:
\begin{enumerate}[(a)]
\item  $F_n$ converges in distribution as $n \to \infty$ to ${\cal N}(0, \mathbf{\Sigma})$, where $\mathbf{\Sigma}$ is the $k\times k$ diagonal matrix with entries $\sigma_1^2, \dots, \sigma_k^2$.
\item  For each $j = 1, \dots, k$, $F_n^j$ converges in distribution as $n\to\infty$ to ${\cal N}(0,\sigma_j^2)$.
\end{enumerate}
\end{theorem}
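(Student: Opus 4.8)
The implication (a)$\Rightarrow$(b) is immediate: each coordinate projection $\pi_j(x_1,\dots,x_k)=x_j$ is continuous, so convergence in distribution of the vectors $F_n$ forces $F_n^j=\pi_j(F_n)$ to converge to the $j$th marginal of ${\cal N}(0,\mathbf\Sigma)$, which is ${\cal N}(0,\sigma_j^2)$. The content of the theorem is the reverse implication (b)$\Rightarrow$(a). A preliminary observation explains why the diagonal form of the limit is consistent with the hypotheses: components sitting in different chaoses are automatically orthogonal by \eqref{Herm_cov}, so $\mathbb E[F_n^jF_n^\ell]=0$ whenever $q_j\neq q_\ell$, while the standing assumption $\mathbb E[F_n^jF_n^\ell]\to0$ disposes of the remaining pairs with $q_j=q_\ell$. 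Thus the limiting covariance is forced to be diagonal, and the genuine work is to promote marginal Gaussianity to joint Gaussianity.

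My plan is to reduce everything to the contraction condition (c) of Theorem \ref{thm1} together with an elementary inequality relating cross- and self-contractions. Assuming (b), Theorem \ref{thm1} applied to each $F_n^j$ gives $\|f_n^j\otimes_r f_n^j\|_{{\cal H}^{\otimes 2(q_j-r)}}\to0$ for every $1\le r\le q_j-1$. The key algebraic input is the standard Cauchy--Schwarz bound for symmetric kernels $f,g\in{\cal H}^{\odot q}$ and $1\le r\le q-1$,
\[\|f\otimes_r g\|_{{\cal H}^{\otimes 2(q-r)}}^2 \le \|f\otimes_{q-r}f\|_{{\cal H}^{\otimes 2r}}\,\|g\otimes_{q-r}g\|_{{\cal H}^{\otimes 2r}}.\]
Since $r\mapsto q-r$ maps $\{1,\dots,q-1\}$ onto itself, the vanishing of all self-contractions forces the vanishing of all cross-contractions $\|f_n^i\otimes_r f_n^j\|\to0$ for components of equal order $q_i=q_j=q$. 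Grouping the components by chaos order and invoking the Cram\'er--Wold device then reduces matters to a single linear combination $\sum_j\lambda_jF_n^j=\sum_q I_q(g_n^{(q)})$, where $g_n^{(q)}=\sum_{j:q_j=q}\lambda_jf_n^j\in{\cal H}^{\odot q}$; expanding $g_n^{(q)}\otimes_r g_n^{(q)}$ into self- and cross-contractions shows $\|g_n^{(q)}\otimes_r g_n^{(q)}\|\to0$, while $\mathbb E[I_q(g_n^{(q)})^2]=q!\,\|g_n^{(q)}\|_{{\cal H}^{\otimes q}}^2$ converges by the covariance hypotheses. Hence each within-order piece $I_q(g_n^{(q)})$ is asymptotically Gaussian by Theorem \ref{thm1}.

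The remaining, genuinely multivariate, step is the one I expect to be the main obstacle: showing that pieces living in distinct chaoses become jointly Gaussian and asymptotically independent, so that $\sum_q I_q(g_n^{(q)})$ converges to a single centered Gaussian with the correct variance $\sum_q\lim_n q!\,\|g_n^{(q)}\|^2$. Marginal convergence of each $I_q(g_n^{(q)})$ on its own does not yield this; what closes the gap is the multivariate Malliavin--Stein machinery of \cite{PT} (see also \cite{NoP11}), in which the distance between $F_n$ and ${\cal N}(0,\mathbf\Sigma)$ is bounded by $\sum_{i,\ell}\big|\Sigma_{i\ell}-\tfrac1{q_\ell}\mathbb E\langle DF_n^i,DF_n^\ell\rangle\big|$ together with the standard deviations of the inner products $\langle DF_n^i,DF_n^\ell\rangle$. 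The first family of terms tends to $0$ by the covariance hypotheses, and each variance term is controlled by exactly the contraction norms $\|f_n^i\otimes_r f_n^j\|$ already shown to vanish. A more classical alternative is the method of moments: using the product formula for multiple integrals one expands all joint moments of $\big(I_q(g_n^{(q)})\big)_q$, notes that every genuinely mixing term carries a contraction factor tending to $0$, and concludes that the joint moments converge to those of independent Gaussians, which determine the limit law. Either way, the crux is to feed the vanishing contractions, extracted from the marginal hypothesis via Theorem \ref{thm1}, into a bona fide multivariate normal approximation.
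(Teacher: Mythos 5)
First, note that the paper contains no proof of this statement: Theorem \ref{Vec4MT} is quoted, with attribution, from Peccati and Tudor \cite{PT} and is used as a black box in the proof of Theorem \ref{thm3}. So there is no internal argument to compare yours against; the only question is whether your proposal stands on its own. Its correct parts are: the direction (a)$\Rightarrow$(b) via continuity of coordinate projections; the observation that chaoses of different orders are orthogonal, so the hypotheses force a diagonal limit covariance; the extraction of vanishing self-contractions from the marginal hypothesis (b) through the equivalence (a)$\Leftrightarrow$(c) of Theorem \ref{thm1}; and the inequality
\[
\left\| f\otimes_r g\right\|_{{\cal H}^{\otimes 2(q-r)}}^2=\left\langle f\otimes_{q-r}f,\,g\otimes_{q-r}g\right\rangle_{{\cal H}^{\otimes 2r}}\le\left\| f\otimes_{q-r}f\right\|_{{\cal H}^{\otimes 2r}}\left\| g\otimes_{q-r}g\right\|_{{\cal H}^{\otimes 2r}},
\]
which correctly gives vanishing cross-contractions for kernels of \emph{equal} order.

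The gap is at the step you yourself flag as the main obstacle, and as written it is not closed. After Cram\'er--Wold and grouping you hold a sum $\sum_q I_q(g_n^{(q)})$ of terms in distinct chaoses, each marginally asymptotically Gaussian; the assertion that such a sum is asymptotically Gaussian with additive variances is not a technical remainder --- it \emph{is} the Peccati--Tudor theorem, and the scalar Theorem \ref{thm1} cannot produce it. Your way out is to invoke ``the multivariate Malliavin--Stein machinery of \cite{PT}'', which is circular (the statement under proof is the main result of \cite{PT}) and also misattributed: \cite{PT} predates Stein--Malliavin theory and proves the theorem by stochastic-calculus arguments (Dambis--Dubins--Schwarz), while the quantitative bound you describe comes from later work of Nourdin, Peccati and R\'eveillac, presented in Chapter 6 of \cite{NoP11}. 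If you are permitted to cite that bound, your argument does close: the mean terms $\Sigma_{i\ell}-q_\ell^{-1}\mathbb{E}\langle DF_n^i,DF_n^\ell\rangle_{\cal H}$ vanish by the covariance hypotheses (identically so when $q_i\ne q_\ell$), and the variance terms are controlled by contraction norms. But two repairs are still needed. First, the Cram\'er--Wold/grouping step becomes redundant, since the bound applies directly to the vector $(F_n^1,\dots,F_n^k)$. Second, and more importantly, those variance terms involve cross-contractions $f_n^i\otimes_r f_n^j$ between kernels of \emph{different} orders $q_i\ne q_j$, which your equal-order argument does not cover, so the claim that they were ``already shown to vanish'' is false as stated; the fix is the asymmetric form of the same identity, $\| f\otimes_r g\|^2\le\| f\otimes_{q_i-r}f\|\,\| g\otimes_{q_j-r}g\|$ (with $\otimes_0$ the tensor product), in which at least one factor is a genuine self-contraction and hence vanishes while the other stays bounded. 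The method-of-moments alternative suffers the same defects: it needs exactly those cross-order contractions, and the claim that ``every genuinely mixing term carries a contraction factor tending to zero'' is asserted, not proved. In short, all your reductions are sound, but the genuinely multivariate core is outsourced to the very reference the theorem is attributed to; replace that citation by the Stein--Malliavin bound of \cite{NoP11} and add the cross-order contraction estimate, and your outline becomes a correct (and by now standard textbook) proof.
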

 
 For a general sequence of square integrable random variables, the convergence to the normal distribution can be deduced from  the Wiener chaos expansion. Using Theorem \ref{Vec4MT}, one can show that if every projection on the Wiener chaos satisfies the hypotheses of the Fourth Moment Theorem, then their limits are independent and a central limit theorem holds for the global sequence. This phenomenon can be described as a {\it chaotic central limit theorem}.  
   
\begin{theorem}[\cite{HuNu}]\label{thm2} Let $\{ F_n\}$ be a sequence in $L^2(\Omega)$ such that ${\mathbb E}\left[ F_n\right]=0$ for all $n$.  Write each $F_n$ in the form 
\[ F_n = \sum_{q=1}^\infty I_q(f_{n,q}),\]
and suppose that the following conditions hold:
\begin{enumerate}[(a)]
\item  For each $q\ge 1$, 
$\lim_{n\to\infty} q! \left\| f_{n,q}\right\|^2_{{\cal H}^{\otimes q}} = \sigma^2_q$, for some $\sigma^2_q \ge 0$.
\item  $\sigma^2 = \sum_{q=1}^\infty \sigma_q^2 < \infty$.
\item  For each $q\ge 2$ and $r= 1, \dots, q-1$,
$ \lim_{n\to\infty} \left\| f_{n,q} \otimes_r f_{n,q}\right\|^2_{{\cal H}^{\otimes 2(q-r)}} =0.$
\item  $ \lim_{N \to \infty} \sup_{n\ge 1} \left(\sum_{q=N+1}^\infty q! \left\| f_{n,q}\right\|^2_{{\cal H}^{\otimes q}}\right) =0$.
\end{enumerate}
Then as $n$ tends to infinity, $F_n \law {\cal N}(0,\sigma^2)$.
\end{theorem}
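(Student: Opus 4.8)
The plan is to reduce the infinite chaos expansion to a finite one by truncation, to apply the vector-valued Fourth Moment Theorem to the finitely many chaos components, and then to let the truncation level grow using the uniform tail control provided by hypothesis (d). The per-chaos convergence and the asymptotic independence of the chaos levels come essentially for free from the cited theorems; the real work is the interchange of the two limits.

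First I would fix an integer $N\ge 1$ and set $F_n^N = \sum_{q=1}^N I_q(f_{n,q})$. By orthogonality of the Wiener chaoses, $\mathbb{E}[I_j(f_{n,j})I_\ell(f_{n,\ell})]=0$ identically whenever $j\neq\ell$, so the cross-covariance hypothesis of Theorem \ref{Vec4MT} holds automatically for the vector $(I_1(f_{n,1}),\dots,I_N(f_{n,N}))$, whose components lie in distinct chaoses. Hypothesis (a) gives $\mathbb{E}[I_q(f_{n,q})^2]=q!\|f_{n,q}\|^2_{{\cal H}^{\otimes q}}\to\sigma_q^2$. For each fixed $q\ge 2$, hypotheses (a) and (c) let me apply the Fourth Moment Theorem (Theorem \ref{thm1}) to conclude $I_q(f_{n,q})\law {\cal N}(0,\sigma_q^2)$, while for $q=1$ this is immediate since $I_1(f_{n,1})$ is already centered Gaussian with variance tending to $\sigma_1^2$. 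Thus each component converges marginally to a normal law, and Theorem \ref{Vec4MT} yields
\[ (I_1(f_{n,1}),\dots,I_N(f_{n,N})) \law {\cal N}(0,\Sigma_N), \qquad \Sigma_N=\mathrm{diag}(\sigma_1^2,\dots,\sigma_N^2). \]
Applying the continuous mapping theorem to the summation map $\mathbb{R}^N\to\mathbb{R}$, I obtain $F_n^N\law {\cal N}(0, s_N^2)$ as $n\to\infty$, where $s_N^2=\sum_{q=1}^N\sigma_q^2$; the diagonal form of $\Sigma_N$ records the asymptotic independence of the chaos components, which is the ``chaotic'' content of the statement.

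Next I would let $N\to\infty$. By hypothesis (b), $s_N^2\to\sigma^2$, hence ${\cal N}(0,s_N^2)\law {\cal N}(0,\sigma^2)$ as $N\to\infty$. The truncation error is controlled in $L^2$ uniformly in $n$: by orthogonality $\mathbb{E}[(F_n-F_n^N)^2]=\sum_{q=N+1}^\infty q!\|f_{n,q}\|^2_{{\cal H}^{\otimes q}}$, so Chebyshev's inequality together with hypothesis (d) gives, for every $\epsilon>0$,
\[ \lim_{N\to\infty}\ \sup_{n\ge 1}\ \mathbb{P}\bigl(|F_n-F_n^N|\ge\epsilon\bigr) \le \lim_{N\to\infty}\frac{1}{\epsilon^2}\sup_{n\ge 1}\sum_{q=N+1}^\infty q!\|f_{n,q}\|^2_{{\cal H}^{\otimes q}}=0. \]

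Finally I would invoke the standard approximation theorem for convergence in distribution, whose hypotheses are now verified: $F_n^N\law {\cal N}(0,s_N^2)$ as $n\to\infty$ for each fixed $N$, ${\cal N}(0,s_N^2)\law {\cal N}(0,\sigma^2)$ as $N\to\infty$, and $\lim_{N\to\infty}\limsup_{n\to\infty}\mathbb{P}(|F_n-F_n^N|\ge\epsilon)=0$ for every $\epsilon>0$. These three facts imply $F_n\law {\cal N}(0,\sigma^2)$, completing the argument. The main obstacle is precisely this interchange of the limits in $n$ and $N$: each fixed truncation is handled by the (vector) Fourth Moment machinery, but passing to the full series rests entirely on the uniform tail estimate (d), which is exactly the ingredient that makes the approximation theorem applicable.
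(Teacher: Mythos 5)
Your proof is correct, and there is nothing in the paper to compare it against: Theorem \ref{thm2} is stated here without proof, being quoted from the reference \cite{HuNu}. Your argument --- truncate at level $N$, apply the Fourth Moment Theorem chaoswise and the Peccati--Tudor theorem (with cross-covariances vanishing identically by orthogonality of distinct chaoses) to get $F_n^N \law {\cal N}(0,s_N^2)$, then remove the truncation via hypothesis (d), Chebyshev, and the approximation theorem for weak convergence (Billingsley, Theorem 3.2) --- is essentially the original proof in \cite{HuNu}, so no gap or divergence to report.
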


\medskip
In the following sections, the symbol $C$ denotes a generic positive constant, which may change from line to line.  The value of $C$ might depend on $T$ and the properties of the process $X$.

\section{Central limit theorem for variations of a self-similar Gaussian process}
\subsection{Defining characteristics of the process} 
Let $X = \{X_t, t\ge 0\}$ denote a centered self-similar Gaussian process with self-similarity parameter $\beta \in(0,1)$.     We introduce the following conditions on the function $\phi$ defined in (\ref{phi}), where   $\alpha \in (0 ,  2\beta]$:

\begin{enumerate}[(H.1)]
\item  $\phi$ has the form $\phi(x) = -\lambda (x-1)^\alpha + \psi(x)$, where $\lambda>0$, $\psi(x)$ is twice-differentiable on an open set containing $[1,\infty)$, and there is a constant $C \ge 0$ such that for any $x\in (1,\infty)$
	\begin{enumerate}[(i)]
	\item  $|\psi'(x)| \le Cx^{\alpha -1}$;
	\item $|\psi''(x)| \le Cx^{-1} (x-1)^{\alpha-1}$; and
	\item $\psi'(1)= \beta \psi(1)$, when $\alpha \ge 1$.
	\end{enumerate}
	\end{enumerate}

The lemma below shows that   $\alpha$  satisfies
${\mathbb E}\left[ (X_{t+s} - X_t)^2\right] \sim s^{\alpha}$,
and we call this value the 
  {\em increment exponent}.

\begin{lemma}  \label{lem3.1}Under (H.1)  for $0<s\le t$ we have
\[
{\mathbb E}\left[ (X_{t+s} - X_t)^2\right] = 2\lambda t^{2\beta -\alpha}s^\alpha + g_1(t,s),
\]
where $|g_1(t,s)| \le Cst^{2\beta-1}$ if $\alpha<1$ and $|g_1(t,s)| \le Cs^2t^{2\beta-2}$ if $\alpha\ge 1$, for some constant $C$.
\end{lemma}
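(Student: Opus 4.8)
The plan is to write the second moment of the increment directly in terms of the covariance $R(s,t) = s^{2\beta}\phi(t/s)$ and then substitute the decomposition from (H.1). For $0 < s \le t$ we have $t \le t+s$, so expanding the square gives
\[
\mathbb{E}\left[(X_{t+s}-X_t)^2\right] = R(t+s,t+s) - 2R(t,t+s) + R(t,t) = (t+s)^{2\beta}\phi(1) - 2t^{2\beta}\phi\left(1+\tfrac st\right) + t^{2\beta}\phi(1),
\]
where I use $R(t,t+s) = t^{2\beta}\phi(1+s/t)$ since $t$ is the smaller argument. Inserting $\phi(x) = -\lambda(x-1)^\alpha + \psi(x)$ and noting $\phi(1) = \psi(1)$, the singular part contributes exactly $2\lambda t^{2\beta}(s/t)^\alpha = 2\lambda t^{2\beta - \alpha}s^\alpha$, which is the announced main term. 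The remainder is then
\[
g_1(t,s) = (t+s)^{2\beta}\psi(1) + t^{2\beta}\psi(1) - 2t^{2\beta}\psi\left(1+\tfrac st\right) = t^{2\beta}\,h\!\left(\tfrac st\right), \qquad h(u) := (1+u)^{2\beta}\psi(1) + \psi(1) - 2\psi(1+u).
\]

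Since $u := s/t \in (0,1]$, everything reduces to estimating $h$ on the bounded interval $(0,1]$, and the desired bounds become $|h(u)| \le Cu$ for $\alpha < 1$ and $|h(u)| \le Cu^2$ for $\alpha \ge 1$. I would first record that $h(0) = 0$ and
\[
h'(u) = 2\beta(1+u)^{2\beta - 1}\psi(1) - 2\psi'(1+u), \qquad h'(0) = 2\beta\psi(1) - 2\psi'(1).
\]
The key observation is that condition (iii), $\psi'(1) = \beta\psi(1)$, is precisely what forces $h'(0) = 0$ in the regime $\alpha \ge 1$; in the regime $\alpha < 1$ only $h(0) = 0$ is available. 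Accordingly, for $\alpha < 1$ I would write $h(u) = \int_0^u h'(v)\,dv$, and for $\alpha \ge 1$ use the second-order Taylor expansion with integral remainder $h(u) = \int_0^u (u-v)h''(v)\,dv$.

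It then remains to bound the derivatives uniformly on $(0,1]$ using (H.1)(i)--(ii). For $\alpha < 1$, the factor $(1+u)^{2\beta-1}$ is bounded and $|\psi'(1+u)| \le C(1+u)^{\alpha-1} \le C$ by (i), so $|h'(v)| \le C$ and hence $|h(u)| \le Cu$, giving $|g_1(t,s)| \le C s t^{2\beta-1}$. For $\alpha \ge 1$, differentiating once more gives $h''(v) = 2\beta(2\beta-1)(1+v)^{2\beta-2}\psi(1) - 2\psi''(1+v)$; the first term is bounded and, by (ii), $|\psi''(1+v)| \le C(1+v)^{-1}v^{\alpha-1} \le C$ because $\alpha - 1 \ge 0$ on the bounded range, so $|h''(v)| \le C$ and the integral remainder yields $|h(u)| \le Cu^2$, i.e. $|g_1(t,s)| \le Cs^2 t^{2\beta-2}$. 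The only genuinely delicate point is the bookkeeping of the case distinction: recognizing that the extra vanishing order of $h$ at $0$ needed for the $\alpha\ge1$ estimate comes exactly from hypothesis (iii), and checking that the derivative bounds from (i) and (ii) are uniform in $u$ precisely because the self-similar scaling confines the argument to $u = s/t \in (0,1]$.
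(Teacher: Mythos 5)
Your proposal is correct and follows essentially the same route as the paper: the same expansion of $\mathbb{E}[(X_{t+s}-X_t)^2]$ via $\phi(1)=\psi(1)$, the same identification of the main term, and the same use of hypothesis (iii) to gain the extra order of vanishing when $\alpha\ge 1$, with bounds from (i)--(ii). Your repackaging of the remainder as $t^{2\beta}h(s/t)$ with a Taylor expansion of $h$ at $0$ is just a change of variables of the paper's integral formula $g_1(t,s)=2t^{2\beta}\int_1^{1+s/t}\left[\psi'(1)y^{2\beta-1}-\psi'(y)\right]dy$, so the two arguments coincide.
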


\begin{proof}
We can write
\begin{align*}
{\mathbb E}\left[ (X_{t+s}-X_t)^2\right] &= (t+s)^{2\beta}\phi(1) + t^{2\beta}\phi(1) - 2t^{2\beta}\phi\left( 1+ \frac st\right)\\
&= \phi(1)\left( (t+s)^{2\beta} - t^{2\beta}\right) + 2t^{2\beta}\left( \phi(1) - \phi\left( 1+\frac st\right)\right)\\
&=\phi(1)\left( (t+s)^{2\beta} - t^{2\beta}\right) +2t^{2\beta}\left( \psi(1) + \lambda\left( \frac st\right)^\alpha - \psi\left( 1+ \frac st\right)\right)\\
&= 2\lambda t^{2\beta-\alpha} s^\alpha + \psi(1)\left( (t+s)^{2\beta} - t^{2\beta}\right) - 2t^{2\beta}\int_1^{1+\frac st} \psi'(y)~dy \\
&= 2\lambda t^{2\beta-\alpha}s^\alpha + g_1(t,s).
\end{align*}
If $\alpha<1$,  it follows from the Mean Value Theorem and the estimate on $|\psi'(x)|$ that  $| g_1(t,s) | \le C st^{2\beta -1}$. If $\alpha \ge 1$ we use property (iii) to get
\[
g_1(t,s)=2 t^{2\beta} \int_1^{1+\frac  st}  [\psi'(1) y^{2\beta -1} - \psi'(y) ]dy,
\]
which implies that   $| g_1(t,s) | \le C s^2t^{2\beta -2}$ due to the estimate on  $|\psi''(x)|$.
\end{proof}

Notice that Lemma  \ref{lem3.1} implies that for $0\le s < t$,
\begin{equation} \label{eq1}
|g_1(t,s)| \le  C s^{\alpha +\varepsilon} t^{2\beta-\alpha -\varepsilon},
\end{equation}
for any $\varepsilon>0$ such that  $1-\alpha -\varepsilon >0$ if $\alpha <1$ and $2-\alpha -\varepsilon >0$ if $1\le\alpha <2$.

\begin{lemma}  \label{lem3.2} Assume condition  (H.1). Then,  
\begin{enumerate}[(a)]
\item  For $0<2s \le t$,  we have  
\[
{\mathbb E}\left[ (X_{t+s} - X_t)(X_t - X_{t-s})\right] = (2^\alpha-2)\lambda t^{2\beta -\alpha}s^\alpha + g_2(t,s),
\]
where $|g_2(t,s) | \le C s^2(t-s)^{2\beta-2}+Cs^{\alpha+1}(t-s)^{2\beta-\alpha-1}$.
\item  For $0<2s\le \frac t3 \le r \le t-2s$, 
\begin{multline*}
{\mathbb E}\left[ (X_t - X_{t-s})(X_r - X_{r-s})\right]\\ = \lambda(r-s)^{2\beta-\alpha}\left[  (t-r-s)^\alpha +(t-r+s)^\alpha- 2(t-r)^\alpha \right] + g_3(r,t,s),
\end{multline*}
where 
 $\left| g_3(r,t,s)\right| \le Cs^2(r-s)^{2\beta - \alpha -1}(t-r-s)^{\alpha -1}+Cs^2(r-s)^{2\beta - 2}$.
\end{enumerate}\end{lemma}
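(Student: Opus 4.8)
My plan is to expand each covariance through the identity $\mathbb{E}[X_uX_v]=u^{2\beta}\phi(v/u)$ (valid for $u\le v$), substitute the decomposition $\phi(x)=-\lambda(x-1)^\alpha+\psi(x)$ from (H.1), and then treat the singular power term and the smooth correction $\psi$ separately. For part (a) I would expand the product into four covariances and regroup them by the two scale factors $t^{2\beta}$ and $(t-s)^{2\beta}$, obtaining
\[
\mathbb{E}\big[(X_{t+s}-X_t)(X_t-X_{t-s})\big]=t^{2\beta}\big[\phi(1+\tfrac st)-\phi(1)\big]+(t-s)^{2\beta}\big[\phi(1+\tfrac{s}{t-s})-\phi(1+\tfrac{2s}{t-s})\big].
\]
The same quantity also arises from the polarization identity $2\mathbb{E}[UV]=\mathbb{E}[(X_{t+s}-X_{t-s})^2]-\mathbb{E}[(X_{t+s}-X_t)^2]-\mathbb{E}[(X_t-X_{t-s})^2]$ combined with Lemma \ref{lem3.1}, which I would keep as a consistency check; I prefer the direct expansion because it evaluates $\phi$ only on $[1,\infty)$ and sidesteps the fact that applying Lemma \ref{lem3.1} to the first variance would require $2s\le t-s$ rather than the weaker hypothesis $2s\le t$.

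Next I would isolate the leading term. Inserting $-\lambda(x-1)^\alpha$ into the grouped expression gives $-\lambda t^{2\beta-\alpha}s^\alpha+\lambda(2^\alpha-1)(t-s)^{2\beta-\alpha}s^\alpha$; writing $(t-s)^{2\beta-\alpha}=t^{2\beta-\alpha}-\big[t^{2\beta-\alpha}-(t-s)^{2\beta-\alpha}\big]$ produces exactly the stated main term $(2^\alpha-2)\lambda t^{2\beta-\alpha}s^\alpha$ plus the residual $-\lambda(2^\alpha-1)s^\alpha\big[t^{2\beta-\alpha}-(t-s)^{2\beta-\alpha}\big]$. Since $2s\le t$ forces $t$ and $t-s$ to be comparable, the Mean Value Theorem bounds this residual by $Cs^{\alpha+1}(t-s)^{2\beta-\alpha-1}$, the second term in the claimed bound for $g_2$.

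The remaining contribution is the $\psi$-part, and this is where the main obstacle lies. The naive estimate using only $|\psi'(y)|\le Cy^{\alpha-1}$ applied term by term yields only order $s\,(t-s)^{2\beta-1}$, which is too weak by a factor $(s/(t-s))^{\alpha}$; the required extra power of $s$ must come from a cancellation between the two grouped brackets. Concretely I would write each bracket as $\int\psi'(1+w)\,dw$ and expand $\psi'(1+w)=\psi'(1)+\int_0^w\psi''(1+v)\,dv$. The $\psi'(1)$-pieces combine to $\psi'(1)\,s\big[t^{2\beta-1}-(t-s)^{2\beta-1}\big]$, of order $s^2(t-s)^{2\beta-2}$ by the Mean Value Theorem, while the $\psi''$-remainders are controlled by $|\psi''(1+v)|\le C(1+v)^{-1}v^{\alpha-1}\le Cv^{\alpha-1}$, giving $\int_0^w|\psi''(1+v)|\,dv\le \tfrac{C}{\alpha}w^{\alpha}$ and hence a contribution of order $s^{\alpha+1}(t-s)^{2\beta-\alpha-1}$. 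Together these reproduce the two terms of the bound for $g_2$, and I note that, unlike in Lemma \ref{lem3.1}, hypothesis (iii) is not needed here because the leading $\psi'(1)$ terms already nearly cancel through the two-increment structure.

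For part (b) I would proceed in the same spirit, expanding and grouping by the scale factors $r^{2\beta}$ and $(r-s)^{2\beta}$ (the orderings of the arguments are guaranteed by $r\le t-2s$, which also keeps all arguments of $\phi$ in a bounded subinterval of $[1,\infty)$). The power part yields $-\lambda r^{2\beta-\alpha}\big[(t-r)^\alpha-(t-r-s)^\alpha\big]-\lambda(r-s)^{2\beta-\alpha}\big[(t-r)^\alpha-(t-r+s)^\alpha\big]$; collecting the $(r-s)^{2\beta-\alpha}$ coefficient gives the stated main term, and the residual $\lambda\big(r^{2\beta-\alpha}-(r-s)^{2\beta-\alpha}\big)\big[(t-r-s)^\alpha-(t-r)^\alpha\big]$ is bounded by applying the Mean Value Theorem to both factors, producing $Cs^2(r-s)^{2\beta-\alpha-1}(t-r-s)^{\alpha-1}$ (here $2s\le t/3\le r$ makes $r,r-s$ comparable, and $t-r\ge 2s$ makes $t-r-s>0$ comparable to $t-r$ and $t-r+s$). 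For the $\psi$-part, which again cannot be estimated termwise, I would write each bracket as $\int\psi'$, change variables to a common parameter $\theta\in[0,1]$, and recognize the result as $s\int_0^1\big[h_\theta(r)-h_\theta(r-s)\big]\,d\theta$ with $h_\theta(\rho)=\rho^{2\beta-1}\psi'\!\big(\tfrac{t-s(1-\theta)}{\rho}\big)$. Estimating $h_\theta(r)-h_\theta(r-s)=\int_{r-s}^r h_\theta'(\rho)\,d\rho$ and using (i)--(ii), the $\psi'$-term of $h_\theta'$ contributes order $(r-s)^{2\beta-2}$ and the $\psi''$-term order $(r-s)^{2\beta-\alpha-1}(t-r-s)^{\alpha-1}$; the difference supplies the second factor of $s$, giving the bound for $g_3$. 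The principal difficulty throughout is precisely this extraction of the extra power of $s$: it forces one to differentiate in the scale variable (equivalently, to expose a second-difference structure), which is exactly where $\psi''$ and hypothesis (ii) enter.
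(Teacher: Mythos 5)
Your proposal is correct and follows essentially the same route as the paper: the same expansion of the covariances via $\phi$ grouped by the scale factors, the identical regrouping that produces the main terms together with the residuals $\lambda(2^\alpha-1)s^\alpha\left[t^{2\beta-\alpha}-(t-s)^{2\beta-\alpha}\right]$ and $\lambda\left(r^{2\beta-\alpha}-(r-s)^{2\beta-\alpha}\right)\left[(t-r-s)^\alpha-(t-r)^\alpha\right]$, and control of the $\psi$-contributions through (H.1)(i)--(ii) and the Mean Value Theorem. The only difference is bookkeeping: where the paper exposes the cancellation by splitting the two $\psi'$-integrals against each other, you expose it by expanding $\psi'$ at $1$ in part (a) and by differentiating the parametrized function $h_\theta$ in the scale variable in part (b) --- equivalent devices yielding the same bounds.
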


\begin{proof}

\medskip
For (a), we can write ${\mathbb E}\left[ (X_{t+s} - X_t)(X_t - X_{t-s})\right]$ as 
\begin{align*}
 &{\mathbb E}\left[ X_{t+s}X_t  -X_t^2- X_{t+s}X_{t-s}+ X_tX_{t-s}\right]\\
&\qquad= t^{2\beta}\left( \phi\left(1+\frac st\right) -\phi(1)\right) - (t-s)^{2\beta}\left( \phi\left( 1+\frac{2s}{t-s}\right)-\phi\left( 1+\frac{s}{t-s}\right)\right)\\
&\qquad=-\lambda t^{2\beta-\alpha}s^\alpha +(t-s)^{2\beta-\alpha}\left( \lambda (2s)^\alpha -\lambda s^\alpha\right)\\
&\qquad\qquad\quad + t^{2\beta}\int_0^{\frac st} \psi'(1+y)~dy - (t-s)^{2\beta}\int_0^{\frac{s}{t-s}}\psi'\left( 1+\frac{s}{t-s} +y\right)~dy\\
&\qquad=\lambda(2^\alpha-2)t^{2\beta-\alpha}s^\alpha + (1-2^\alpha)\lambda s^\alpha\left( t^{2\beta-\alpha}-(t-s)^{2\beta-\alpha}\right)\\
&\qquad\quad\qquad+\left( t^{2\beta}-(t-s)^{2\beta}\right)\int_0^{\frac st} \psi'(1+y)~dy\\
&\qquad\quad\qquad +(t-s)^{2\beta}\int_0^{\frac st} \psi'(1+y) - \psi'\left( 1+\frac{s}{t-s} +y\right)~dy\\
&\qquad\quad\qquad -(t-s)^{2\beta}\int_{\frac st}^{\frac{s}{t-s}} \psi'\left( 1+\frac{s}{t-s} +y\right)~dy\\
&\qquad= (2^\alpha-2)\lambda s^\alpha t^{2\beta-\alpha} + g_2(t,s),
\end{align*}
where, given conditions on $\psi$ and its derivatives, we have that  \[|g_2(t,s)| \le Cs^2(t-s)^{2\beta -2}+ Cs^{\alpha +1}(t-s)^{2\beta-\alpha-1}.\]

Next, for (b),
\begin{align*}
&{\mathbb E}\left[ (X_t - X_{t-s})(X_r - X_{r-s})\right]\\
&\qquad= r^{2\beta}\left( \phi\left(\frac tr\right) - \phi\left(\frac{t-s}{r}\right)\right) - (r-s)^{2\beta}\left( \phi\left(\frac{t}{r-s}\right) - \phi\left( \frac{t-s}{r-s}\right)\right)\\
&\qquad=\lambda r^{2\beta-\alpha}\left(   (t-r-s)^\alpha -(t-r)^{\alpha} \right) -\lambda(r-s)^{2\beta-\alpha}\left(   (t-r)^\alpha -(t-r+s)^\alpha \right)\\
&\qquad\qquad\quad +r^{2\beta}\int_0^{\frac sr} \psi'\left( \frac{t-s}{r} +y\right)~dy - (r-s)^{2\beta}\int_0^{\frac{s}{r-s}} \psi'\left( \frac{t-s}{r-s} +y\right)~dy\\
&\qquad=\lambda(r-s)^{2\beta-\alpha}\left(  (t-r-s)^\alpha +   (t-r+s)^\alpha- 2(t-r)^\alpha \right)\\
&\qquad\qquad\quad -\lambda\left( r^{2\beta-\alpha} - (r-s)^{2\beta-\alpha}\right)\left( (t-r)^\alpha - (t-r-s)^{\alpha}\right)\\
&\qquad\qquad\quad +r^{2\beta}\int_0^{\frac sr} \psi'\left( \frac{t-s}{r} +y\right)~dy - (r-s)^{2\beta}\int_0^{\frac{s}{r-s}} \psi'\left( \frac{t-s}{r-s} +y\right)~dy\\
&\qquad= \lambda(r-s)^{2\beta-\alpha} \left(  (t-r-s)^\alpha +   (t-r+s)^\alpha- 2(t-r)^\alpha \right) + g_3(r,t,s),
\end{align*}
where  
\begin{align*}
&g_3(r,t,s)= \lambda\left(  (r-s)^{2\beta-\alpha}- r^{2\beta-\alpha} \right)\left( (t-r)^\alpha - (t-r-s)^{\alpha}\right) \\
& \qquad \quad \qquad + \left( r^{2\beta}-(r-s)^{2\beta}\right)\int_0^{\frac{s}{r-s}} \psi'\left( \frac{t-s}{r-s} +y\right)~dy\\
&\qquad\quad\qquad - r^{2\beta}\int_0^{\frac{s}{r}} \left[  \psi'\left( \frac{t-s}{r-s}+y\right) 
  - \psi'\left( \frac{t-s}{r}+y\right) \right]dy\\
  & \qquad \quad \qquad  -r^{2\beta} \int_{\frac{s}{r}}^{\frac{s}{r-s}} \psi'\left( \frac{t-s}{r} +y\right)~dy.
\end{align*}
Using  the Mean Value Theorem, the fact that  $\frac r2 \le r-s \le r$  and bounds on the derivatives of $|\psi|$, we have that, for a constant $C$,
\begin{align*}
| g_3(r,t,s)| &\le Cs^2(r-s)^{2\beta-\alpha -1}(t-r-s)^{\alpha-1}+Cs^2(r-s)^{2\beta-2}\left(\frac{t-s}{r-s}\right)^{\alpha-1}\\
&\qquad\qquad\quad+Cs^2(r-s)^{2\beta-1}\left(\frac{t-s}{r-s}\right)^{-1}\left(\frac{t-r}{r-s}\right)^{\alpha-1}\left(\frac{t-s}{r(r-s)}\right)\\
&\qquad\qquad\quad+Cs^2(r-s)^{2\beta-2}\left(\frac{t-s}{r-s}\right)^{\alpha-1}.
\end{align*}
If $\alpha <1$, then $(t-s)^{\alpha-1} \le (t-r-s)^{\alpha-1}$ and $(t-r)^{\alpha-1} \le (t-r-s)^{\alpha-1}$. Therefore,  we have the bound
\[
| g_3(r,t,s)| \le Cs^2(r-s)^{2\beta-\alpha -1}(t-r-s)^{\alpha-1}.\]
In the case $\alpha \ge 1$, then $r \ge t/3$ implies $(t-s)/(r-s) \le 5$. Using this inequality and $t-r\le 2(t-r-s)$,  we obtain
\[
| g_3(r,t,s)| \le Cs^2(r-s)^{2\beta-\alpha -1}(t-r-s)^{\alpha-1} + Cs^2(r-s)^{2\beta-2}.
\]
Then, the proof of  part (b) is complete.

\end{proof}

\begin{example}
Let $B^H = \{ B^H_t, t\ge 0\}$ denote a fractional Brownian motion with Hurst parameter $H$. Then condition (H.1) is satisfied with
 $\alpha = 2\beta = 2H$, $\lambda = \frac 12$. In this case, we obtain
\begin{align*}
{\mathbb E}\left[ (B^H_{t+s} - B^H_t)^2\right] &= s^{2H}\\
{\mathbb E}\left[ (B^H_{t+s} - B^H_{t})(B^H_t - B^H_{t-s})\right]&=\frac 12 (2^{2H}-2)s^{2H}\\
{\mathbb E}\left[ (B^H_t - B^H_{t-s})(B^H_r - B^H_{r-s})\right]&= \frac 12 \left(   (t-r+s)^{2H} +(t-r-s)^{2H}-2(t-r)^{2H} \right),\end{align*}
which means that  $g_1 = g_2 = g_3 =0$ in  Lemmas 3.1 and 3.2.  This means that, in the general case, 
 we can think of $X$ as a process that is similar to the fBm, but with an additional, lower-order correction term on the covariance.   In Section 4 we give examples where the terms $g_i$, $i=1,2,3$, are nonzero.
\end{example}

\medskip
We will make use of the following
 additional condition on the behavior of the first to derivatives of $\phi$ at infinity, which cannot be deduced from condition (H.1).
\begin{enumerate}[(H.2)]
\item  There are constants $C>0$ and $1 < \nu \le 2$ such that for all $x \ge  2$,
	\begin{enumerate}[(i)]
	\item  $	|\phi'(x)| \le \begin{cases} C(x-1)^{-\nu}&\text{ if  }\alpha < 1\\C(x-1)^{\alpha -2}&\text{ if  }\alpha \ge 1,\end{cases}$
	\item $|\phi''(x)| \le \begin{cases} C(x-1)^{-\nu-1}&\text{ if  }\alpha < 1\\C(x-1)^{\alpha -3}&\text{ if  }\alpha \ge 1.\end{cases}$
	\end{enumerate}
 
\end{enumerate}

\medskip
\subsection{Central limit theorem}
We are now ready to state the main result.  
 
\begin{theorem}   \label{thm3} Suppose a self-similar Gaussian process $(X,\phi)$ satisfies (H.1) and (H.2) above.  For $n \ge 2$, consider the stochastic process defined in (\ref{hv}), that is, 
\[
F_n(t) = \frac 1{\sqrt{n}}  \sum_{j=0}^{ \Nt -1} f(  Y_{j,n}), \quad \Nt \ge 1,
\]
and  $F_n(t)=0$  if $\Nt <1$. We assume that $f\in L^2(\mathbb{R}, \gamma)$ has the expansion (\ref{chaos}) with Hermite rank $d\ge 2$.
 Then,  if $\alpha  < 2-\frac 1d$,   the finite dimensional distributions of the  processes $\{F_n, n\ge 2\}$ converge in law  to those of a Brownian motion with scaling given by  $\sigma^2= \sum_{q=d} ^\infty c_q^2 \sigma^2_q$, where 
\beq{var_main}  \sigma_q^2 =2^{-q} q!  
\sum_{m\in \mathbb{Z}} \left(   |m+1|^\alpha +|m-1|^\alpha- 2|m|^\alpha \right)^q.
\eeq
\end{theorem}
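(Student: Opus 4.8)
The plan is to verify, for every Wiener chaos, the hypotheses of the chaotic central limit theorem (Theorem \ref{thm2}) and then invoke the Peccati--Tudor criterion (Theorem \ref{Vec4MT}) to obtain joint convergence of the increments of $F_n$ over disjoint time intervals. Writing $Y_{j,n}=Z(e_{j,n})$ with $e_{j,n}=\Delta X_{j/n}/\|\Delta X_{j/n}\|_{\cal H}$ a unit vector of the Hilbert space $\cal H$ canonically associated to $X$, the isometry \req{Hmap} gives $H_q(Y_{j,n})=I_q(e_{j,n}^{\otimes q})$, so that
\[
F_n(t)=\sum_{q=d}^\infty c_q I_q(f_{n,q}(t)),\qquad f_{n,q}(t)=\frac{1}{\sqrt n}\sum_{j=0}^{\Nt -1} e_{j,n}^{\otimes q}.
\]
Everything is then governed by the normalized correlations $\rho_n(j,k)=\langle e_{j,n},e_{k,n}\rangle_{\cal H}=\mathbb E[Y_{j,n}Y_{k,n}]$, which satisfy $|\rho_n(j,k)|\le 1$.

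The first task is the covariance analysis. Using self-similarity together with Lemmas \ref{lem3.1} and \ref{lem3.2}, I would show that away from the index $0$ these correlations behave like the stationary fBm-type weights
\[
\rho(m)=\tfrac12\left(|m+1|^\alpha+|m-1|^\alpha-2|m|^\alpha\right),\qquad m=j-k,
\]
the discrepancy being due to the error terms $g_1,g_2,g_3$ and to the non-stationary prefactors $(j/n)^{2\beta-\alpha}$, which are negligible for well-separated bulk indices and whose aggregate contribution is summable. Since $\rho(m)\sim\tfrac12\alpha(\alpha-1)|m|^{\alpha-2}$ as $|m|\to\infty$, the summability $\sum_{m}|\rho(m)|^d<\infty$ is exactly equivalent to the hypothesis $\alpha<2-\tfrac1d$, and because $|\rho(m)|\le 1$ this forces $\sum_m|\rho(m)|^q<\infty$ for every $q\ge d$. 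From $q!\|f_{n,q}(t)\|^2=\frac{q!}{n}\sum_{j,k}\rho_n(j,k)^q$ one then obtains $q!c_q^2\|f_{n,q}(t)\|^2\to c_q^2\sigma_q^2\,t$, which is condition (a) of Theorem \ref{thm2} and reproduces the stated constant $\sigma_q^2=2^{-q}q!\sum_m(\ldots)^q$; summing over $q$ gives condition (b) with total scaling $\sigma^2 t$.

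For the Brownian finite-dimensional structure I would work with the increments $F_n(t_i)-F_n(t_{i-1})$ over consecutive disjoint intervals and show that their cross-covariances vanish as $n\to\infty$: the contributing index pairs are those straddling a shared endpoint, and their number and magnitude are controlled by the decay of $\rho$, so disjoint increments become asymptotically uncorrelated, hence jointly Gaussian and independent in the limit. The uniform tail estimate, condition (d), is then immediate from $|\rho_n|\le 1$: for $q\ge d$ we have $\|f_{n,q}(t)\|^2\le\frac1n\sum_{j,k}|\rho_n(j,k)|^d$, a quantity bounded uniformly in $n$, so that $\sum_{q>N}q!c_q^2\|f_{n,q}(t)\|^2\le Ct\sum_{q>N}q!c_q^2\to 0$ uniformly in $n$, using $\sum_q q!c_q^2=\|f\|_{L^2(\gamma)}^2<\infty$.

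The heart of the proof --- and the step I expect to be the main obstacle --- is the contraction condition (c): for each $q\ge d$ and each $1\le r\le q-1$,
\[
\|f_{n,q}(t)\otimes_r f_{n,q}(t)\|^2_{{\cal H}^{\otimes 2(q-r)}}=\frac{1}{n^2}\sum_{j,k,j',k'}\rho_n(j,k)^r\rho_n(j',k')^r\rho_n(j,j')^{q-r}\rho_n(k,k')^{q-r}\longrightarrow 0.
\]
Here the non-stationarity is genuinely felt: one cannot collapse the four-index sum into a convolution as in the classical stationary Breuer--Major argument, and the corrections $g_1,g_2,g_3$ and the growth bounds of condition (H.2) must be carried along. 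The strategy is to establish a uniform bound $|\rho_n(j,k)|\le C\min(1,|j-k|^{\alpha-2})$ (with an extra boundary term near $0$), split the quadruple sum according to which index gaps are small, and reduce each piece by a H\"older/Young-type inequality to products of sums of the form $\sum|\rho|^{p}$ with $p\ge d$; the constraint $\alpha<2-\tfrac1d$ is precisely what makes the critical such sum converge while a power of $1/n$ remains to kill the rest. Once (a)--(d) hold in every chaos and the cross-covariances vanish, Theorem \ref{Vec4MT} upgrades the per-chaos multidimensional convergence to joint convergence across the times, and Theorem \ref{thm2} sums the chaoses, yielding convergence of the finite-dimensional distributions of $F_n$ to those of the scaled Brownian motion.
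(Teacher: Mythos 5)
Your proposal follows essentially the same route as the paper's own proof: the paper likewise reduces to increments $G_n(t_i)=F_n(t_i)-F_n(t_{i-1})$, applies the chaotic CLT (Theorem \ref{thm2}) chaos by chaos --- with the variance convergence being exactly its Lemma \ref{lem5.2}, proved by comparing the normalized correlations to the fBm-type weights via Lemmas \ref{lem3.1}, \ref{lem3.2} and \ref{lem5.1} --- verifies the contraction condition by the same split-by-gap-size plus H\"older argument you sketch, shows the cross-covariances of disjoint increments vanish, and concludes with the Peccati--Tudor criterion (Theorem \ref{Vec4MT}). The only notable cosmetic difference is your handling of condition (d), where the clean bound $|\rho_n(j,k)|^q\le|\rho_n(j,k)|^d$ together with a uniform-in-$n$ bound on $\frac 1n\sum_{j,k}|\rho_n(j,k)|^d$ replaces the paper's appeal to the estimates \req{eq7}, \req{eq12} and Remark 5.3; both rest on the same covariance analysis (and note that, for $\alpha<1$ and well-separated indices, the correct decay rate involves the exponent $\nu$ of (H.2) rather than the literal $|j-k|^{\alpha-2}$ you posit, though this does not affect the summability your argument needs).
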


\subsection{Proof of Theorem \ref{thm3}} 
We show the convergence of the finite dimensional distributions  using Theorem \ref{Vec4MT} and Theorem \ref{thm2}.  For an integer $p\ge 2$, choose times $0< t_1 < \dots < t_p < \infty$, and define $t_0 = 0$.  For $i = 1, \dots, p$, define $G_n(t_i) = F_n(t_i) - F_n(t_{i-1})$.  We   want to show that each $G_n(t_i)$ converges in law to ${\cal N}(0,\sigma^2(t_i - t_{i-1}))$ applying Theorem \ref{thm2}.  Without loss of generality, it is enough to prove this for $G_n(t_1) = F_n(t_1)$.  
\medskip
  
The projection of $F_n(t_1)$ on the  Wiener chaos of order $q$ is 
\[
 F_{n,q}(t_1)=\frac {c_q} {\sqrt{n}} \sum_{j=0}^{\lfloor nt_1 \rfloor -1} H_q(Y_{j,n}).
 \]
  By \req{Hmap}, we can write $F_{n,q}$ in terms of the stochastic integral operator $I_q$
\[ 
F_{n,q}(t_1)  =  \frac {c_q} {\sqrt{n}}
\sum_{j=0}^{\lfloor nt_1 \rfloor -1}  \xi_{j,n} ^{-q} I_q \left( \partial_{\frac jn}^{\otimes q}\right),\]
where  we use the notation 
\[
 \xi_{j,n}= \| \Delta X_{\frac jn}\|_{L^2(\Omega)}.
 \]
The symbol $\partial_{j/n}$ denotes the indicator function of the interval $[\frac jn, \frac{j+1}{n}]$, that is
$\partial_{\frac jn} = {\mathbf 1}_{[\frac jn, \frac{j+1}{n}]}$. 
To verify the   conditions of Theorem \ref{thm2}, we adopt the following  Hilbert space notation.  The indicator function ${\mathbf 1}_{[0,t]}$ is an element of the Hilbert space $\hten $,  defined as the closure of the set of step functions with respect to the inner product
\[
 \left< {\mathbf 1}_{[0,s]}, {\mathbf 1}_{[0,t ]}\right>_\hten = {\mathbb E}\left[ X_{s}X_{t}\right], \quad s,t \ge 0.
 \] 
With this representation, we can write $F_{n,q}(t_1) = I_q (f_{n,q})$, where
\begin{equation}
\label{eq5} f_{n,q} = f_{n,q}(t_1)  =  \frac {c_q} {\sqrt{n}}\sum_{j=0}^{\lfloor nt_1 \rfloor -1}  h_{j,n}   ^{\otimes q},
\end{equation}
where $h_{j,n}= \xi_{j,n} ^{-q} \partial_{\frac jn}$.
It is apparent that $f_n \in \hten^{\odot q}$. 

Now we proceed to verify the conditions of Theorem \ref{thm2}.

\medskip
\noindent
{\it  Proof of condition (a):}
We want to show that for any $q\ge d$,  ${\mathbb E}\left[F_{n,q}(t_1)^2\right]$ converges to $\sigma_q^2t_1$, where $\sigma_q^2$ is given by \req{var_main}.  
This is the contents of Lemma \ref{lem5.2},  whose proof is given in Section 5.
 
\medskip
\noindent
{\it  Proof of condition (b):}  This is obvious by definition.
 
  \medskip
  \noindent
{\it  Proof of condition (c):}  
We wish to show that for each $r=1,2,\dots, q-1$,
\beq{Contraction_cond} \lim_{n\to\infty}\left\| f_{n,q} \otimes_r f_{n,q}\right\|^2_{\hten^{\otimes 2(q-r)}} = 0.\eeq
By \req{contract_integrl} we have for each $r = 1, \dots, q-1$
\[
 f_{n,q} \otimes_r f_{n,q} =   \frac {c_q^2} n \sum_{j,k=0}^{\lfloor nt_1 \rfloor -1}
 \left<h_{j,n} ,  h_{k,n}\right>_\hten^r \left( h_{j,n}^{\otimes q-r} \otimes  h_{k,n}^{\otimes q-r}\right),
 \]
which is an element of $\hten^{\otimes 2(q-r)}$.  It follows that
\begin{equation}
\left\| f_{n,q} \otimes_r f_{n,q} \right\|^2_{\hten^{\otimes 2(q-r)}}
= \frac {c_q^4} {n^2} \sum_{j,k,\ell,m=0}^{\lfloor nt_1 \rfloor -1}
\left<  h_{j,n},  h_{k,n}\right>_\hten^r 
\left< h_{\ell,n},  h_{m,n}\right>_\hten^r\left< h_{j,n},  h_{\ell,n}\right>_\hten^{q-r}
\left<  h_{k,n},  h_{m,n}\right>_\hten^{q-r}.
\label{Contraction_stmt}
\end{equation}

\medskip
We proceed in a manner similar to the  proof of  the convergence (1.3) in \cite{DaNoNu}. Fix an integer $M\ge 1$.  First we decompose the set of multi-indexes $D=[0,\lfloor nt_1 \rfloor -1]^4$ into $D=D_{1,M}\cup D_{2,M}$, where
\[
D_{1,M}= \{(j,k,\ell,m)\in D:  |j-k|\le M, |\ell-m| \le M, |j-\ell| \le M\},
\]
\[
D_{2,M}=D_{3,M} \cup D_{4,M} \cup D_{5,M},
\]
\[
D_{3,M}=  \{(j,k,\ell,m)\in D:  |j-k| > M\}, 
\]
\[  D_{4,M}=  \{(j,k,\ell,m)\in D:  |\ell -m| > M\}, 
\]
and
\[
D_{5,M}=  \{(j,k,\ell,m)\in D:  |j-\ell| > M\}.
\]
Taking into account that $\| h_{j,n}\|_{\hten}=1$ and using Cauchy-Schwarz inequality, it follows that
\[
 \frac {1} {n^2} \sum_{(j,k,\ell,m)\in D_{1,M}}
\left| \left<  h_{j,n},  h_{k,n}\right>_\hten^r 
\left< h_{\ell,n},  h_{m,n}\right>_\hten^r\left< h_{j,n},  h_{\ell,n}\right>_\hten^{q-r}
\left<  h_{k,n},  h_{m,n}\right>_\hten^{q-r} \right| 
\le \frac {CM^3}n,
\]
which converges to zero as $n$ tends to infinity, for any fixed $M$. It suffices to handle the sum over one of the sets $D_{i,M}$ for $i=3,4,5$. The analysis is the same for each of them and we consider only the case $i=3$. Set
\[
A_{n,M} =
 \frac {1} {n^2} \sum_{(j,k,\ell,m)\in D_{3,M}}
\left| \left<  h_{j,n},  h_{k,n}\right>_\hten^r 
\left< h_{\ell,n},  h_{m,n}\right>_\hten^r\left< h_{j,n},  h_{\ell,n}\right>_\hten^{q-r}
\left<  h_{k,n},  h_{m,n}\right>_\hten^{q-r} \right| .
\]
  By H\"older's inequality, we can write
\begin{align*}
|A_{n,M} |  & \le   \frac {1} {n^2}
\left(\sum_{(j,k,\ell,m)\in D_{3,M} } \left| \left<  h_{j,n},  h_{k,n}\right>_\hten \right|^q 
 \left| \left< h_{\ell,n},  h_{m,n}\right>_\hten \right|^q
\right) ^{\frac rq} \\
& \qquad \times
 \left( \sum_{(j,k,\ell,m)\in D_{3,M}}   \left| \left< h_{j,n},  h_{\ell,n}\right>_\hten \right|^q 
 \left| \left< h_{k,n},  h_{m,n}\right>_\hten \right|^q 
 \right) ^{1-\frac rq}  \\
 & \le   \left( \frac 1n \sum_{j,k=0 \atop |j-k|>M} ^{\lfloor nt_1 \rfloor -1} \left| \left<  h_{j,n},  h_{k,n}\right>_\hten \right|^q  \right)^{\frac rq}
  \left(  \frac 1n \sum_{\ell ,m=0}^{\lfloor nt_1 \rfloor -1}   \left| \left<  h_{\ell,n},  h_{m,n}\right>_\hten \right|^q  \right)^{2- \frac rq}
\end{align*}
Using the same arguments as in the proof of Lemma \ref{lem5.2}, one can show that the lim sup  as $n$ tends to infinity of the above expression is bounded by
\begin{align*}
 & \left(2^{-q}\sum_{m\in \mathbb{Z}, |m| >M}    \left| |m+1|^\alpha + |m-1| ^{\alpha} -2|m|^\alpha \right| ^q
  \right)^{\frac rq} \\
  &\qquad \times
    \left(2^{-q}\sum_{m\in \mathbb{Z}}    \left| |m+1|^\alpha + |m-1| ^{\alpha} -2|m|^\alpha \right| ^q
  \right)^{2-\frac rq},
  \end{align*}
  which converges to zero as  $M$ tends to infinity. This completes the proof of property (c).

  \medskip
  \noindent
{\it  Proof of condition (d):}  
Condition (d) follows from the proof of Lemma \ref{lem5.2}. In fact, we know that the series
$\sum_{q} c_q^2 q!$ is convergent, and it suffices to take into account the estimates (\ref{eq7}), (\ref{eq12}) and Remark 5.3 after the proof of  Lemma \ref{lem5.2}.

With conditions (a) - (d) of Theorem \ref{thm2} satisfied, it follows that for $i = 1, \dots, p$, $G_n(t_i)$ converges in law to a Gaussian random variable with mean zero and variance given by $\sigma^2(t_i - t_{i-1})$.  We next want to show that $\lim_{n\to\infty} \mathbb{E}\left[ G_n(t_i)G_n(t_j)\right] = 0$ when $t_i \neq t_j$.  Without loss of generality, it is enough to show that
\[ \sup_{q\ge d}\left| \frac{1}{n}\sum_{k=0}^{\lfloor nt_1 \rfloor -1} \sum_{j = \lfloor nt_1 \rfloor}^{\lfloor nt_2 \rfloor -1} \xi_{k,n}^{-q}\xi_{j,n}^{-q}\left( \mathbb{E}\left[ \DXj \Delta X_{\frac kn}\right]\right)^q\right| \]
tends to zero as $n$ tends to infinity.  Let $M_n = (nt_2)^{\frac 13}$, we can decompose the above sum into
\[
\frac{1}{n}\sum_{(j,k)\in D_{1,n}}\xi_{k,n}^{-q}\xi_{j,n}^{-q}\left( \mathbb{E}\left[ \DXj \Delta X_{\frac kn}\right]\right)^q + \frac{1}{n}\sum_{(j,k)\in D_{2,n}}\xi_{k,n}^{-q}\xi_{j,n}^{-q}\left( \mathbb{E}\left[ \DXj \Delta X_{\frac kn}\right]\right)^q,
\]
where
\[
D_{1,n} = \{(j,k)\in D_n: |j-k| \le M_n\}, \quad D_{2,n} = \{ (j,k)\in D_n: |j-k| >M_n\},
\]
and
\[
D_n= \{(j,k): 0\le k \le \lfloor nt_1\rfloor-1, \lfloor nt_1 \rfloor \le j \le \lfloor nt_2 \rfloor-1\}.
\]
By Lemma \ref{lem3.1} and Cauchy-Schwarz, the first sum is bounded by $Cn^{-\frac 13}$.  For the second sum, using arguments from Step 4 and Step 5 in the proof of Lemma \ref{lem5.2} (see \req{eq3}), the  $\limsup$ as $n$ tends to infinity is bounded  by 
\[ C\lim_{n\to\infty} \sum_{m=M_n}^\infty \left( (m+1)^\alpha+(m-1)^\alpha-2m^\alpha\right)^q \le C\lim_{n\to\infty} M_n^{q(\alpha-2)+1} =0,\]
where the limit follows since $M_n =Cn^{\frac 13}$ and $q(\alpha-2)+1 <0$.

Based on the above, Theorem \ref{Vec4MT} implies that the vector sequence $\left( G_n(t_1), \dots, G_n(t_p)\right)$ converges in distribution to  ${\cal N}(0, \mathbf{\Sigma})$ as $n$ tends to infinity, where $\mathbf{\Sigma}$ is the diagonal matrix with entries $\sigma^2(t_i - t_{i-1})$, $i = 1, \dots, p$.  Taking $F_n(t_i) = \sum_{k=1}^i G_n(t_k)$, we have that
\beq{VecFn} \left( F_n(t_1), \dots, F_n(t_p)\right) \law \left( F(t_1), \dots, F(t_p)\right)\eeq
as $n\to\infty$, where each $F(t_i)\sim {\cal N}(0, t_i \sigma^2)$.   This completes the proof of Theorem \ref{thm3}.

 \subsection{Theorem 3.4 examples}
In this section we consider some particular cases where the function $f$ can be represented as a single Hermite polynomial, or a linear combination of finitely many Hermite polynomials.

\medskip
\begin{example}
For integer $p \ge 1$, let $f(x) = x^{2p} - {\mathbb E}\left[ Z^{2p}\right]$, where $Z$ has a ${\cal N}(0,1)$ distribution.  The Stroock formula \cite{Stroock} for a Hermite expansion gives:
\begin{align*}
f(Z) &= \mathbb{E}\left[ f(Z)\right] + \sum_{q=1}^{2p} \frac{1}{q!}\mathbb{E}\left[ f^{(q)}(Z)\right] H_q(Z)\\
&= \sum_{q=1}^{2p} \frac{(2p)!}{q!(2p-q-1)!}\mathbb{E}\left[ Z^{2p-q}\right] H_q(Z).
\end{align*}
Since $2p-1$ is odd, $f$ has Hermite rank $2$,  and Theorem \ref{thm3} can be applied.  In this case, $F_n$ converges in law to a Gaussian random variable with variance $\sigma^2 = \sum_{q=2}^{2p} c_q^2 \sigma_q^2,$ where
\[ c_q = \frac{(2p)!}{q!(2p-q-1)!}\mathbb{E}\left[ Z^{2p-q}\right] = \frac{(2p)!(2p-q-1)!!}{q!(2p-q-1)!}\]
for even $q = 2, \dots, 2p$, and $c_q = 0$ for all odd integers and $q$ greater than $2p$.

For the odd integer case, let $f(x) = |x|^{2p+1} - \mathbb{E}\left[|Z|^{2p+1}\right]$.  Again, since $\mathbb{E}\left[ f'(Z)\right] = 0$, the Hermite rank is 2 and we can apply Theorem \ref{thm3}.  In this case, $\sigma^2 = \sum_{q=2}^{2p} c_q^2 \sigma_q^2,$ where
\[ 
c_q = \frac{(2p+1)!}{q!(2p-q)!}\mathbb{E}\left[ |Z|^{2p-q+1}\right] = \frac{(2p+1)!(2p-q)!!}{q!(2p-q)!}\sqrt{\frac{2}{\pi}}
\]
for even $q = 2, \dots 2p$, and $c_q =0$ for all odd integers $q$ and all $q$ greater than $2p$.
\end{example}

\medskip
\begin{example}
Suppose $f = H_q$ for a single Hermite polynomial of order $q\ge 2$.  In this case, one can show that for fixed $t$, $F_n = F_n(t)$ converges in total variation.  Let $N$ denote a random variable with the ${\cal N}(0,t\sigma_q^2)$ distribution.  In \cite{NoP09} it is proved that
\[ d_{TV}(F_n, N) \le \frac{2}{t\sigma_q^2}\sqrt{\text{Var}\left(\frac 1q\| DF_n\|^2_\hten\right)},\]
where $d_{TV}$ denotes total variation distance and $DF_n$ is the Malliavin derivative of $F_n$ \cite{NoP11, Nualart}.  
We can write  $F_n= I_q( f_{n,q})$, where $f_{n,q}$ is given by  (\ref{eq5}) with $t_1=t$.  From \cite[Lemma 5.2.4]{NoP11}, we have
\begin{equation*}
 \text{Var}\left(\frac 1q\| DF_n\|^2_\hten\right)=\frac{1}{q^2}\sum_{r=1}^{q-1}r^2r!{\binom qr}^4 (2q-2r)!\left\| f_{n,q} \stackrel{\sim}{\otimes}_r f_{n,q} \right\|_{\hten^{2(q-r)}}^2 
\end{equation*}
where $f_{n,q} \stackrel{\sim}{\otimes}_r f_{n,q}$ denotes the symmetrization of $f_{n,q} \otimes_r f_{n,q}$.  Therefore,   by condition (c) in the proof of Theorem \ref{thm3} and using the identity
\[ \left\| f_{n,q} \stackrel{\sim}{\otimes}_r f_{n,q} \right\|_{\hten^{2(q-r)}}^2 \le \left\| f_{n,q} \otimes_r f_{n,q} \right\|_{\hten^{2(q-r)}}^2\] for each $r = 1, \dots, q-1$, we obtain 
\[
\lim_{n\to\infty} d_{TV}(F_n, N) =0.
\]

On the other hand, in this example we can show a functional central limit theorem.  Indeed, by \req{eq15} in the proof of Lemma \ref{lem5.2} and Remark 5.3 following, we can show that for sufficiently large $n$,
\[ 
\mathbb{E}\left[ \left( F_n(t_i) - F_n(t_{i-1})\right)^2\right] \le C\left( \frac{\lfloor nt_i\rfloor - \lfloor nt_{i-1}\rfloor}{n}\right).
\]
Moreover,  using the fact that all the $p$-norms are equivalent on a fixed Wiener chaos,  for arbitrary $0 \le s_1 < s < s_2 \le T$ we deduce that for $n $ large enough
\begin{eqnarray*}
&& \mathbb{E}\left[ \left( F_n(s)-F_n(s_1)\right)^2\left(F_n(s_2)-F_n(s)\right)^2\right]\\
&&\qquad \qquad  \le \left(\mathbb{E}\left[\left(F_n(s) - F_n(s_1)\right)^4\right]\right)^\frac 12 \left(\mathbb{E}\left[\left(F_n(s_2) - F_n(s)\right)^4\right]\right)^\frac 12 \\
&& \qquad \qquad \le C\left( \frac{\lfloor ns_2\rfloor - \lfloor ns_1\rfloor}{n}\right)^2.
\end{eqnarray*}
This implies that the laws  of the processes $\{F_n, n\ge 2\}$ are tight in the Skorohod space ${\bf D} [0,\infty)$ (see Billingsley \cite[Theorem 13.5]{Billingsley}). As a consequence, from Theorem \ref{thm3} we deduce that  the laws of $F_n$ converge in the topology of ${\bf D} [0,\infty)$  to a Brownian motion with scaling $\sigma^2$.
\end{example}

\medskip
\section{Examples of suitable processes}

\subsection{Bifractional Brownian motion}
Bifractional Brownian motion is a generalization of fBm, first introduced by Houdr\'e and Villa in \cite{Houdre}.  It is defined as a centered, Gaussian process $B = \{ B^{H,K}_t, t\ge 0\}$ with covariance
\[{\mathbb E}\left[B_s B_t\right] =\frac{1}{2^K}\left[ (s^{2H} +t^{2H})^K - |t-s|^{2HK}\right],\]
where $H \in (0,1)$ and $K\in(0,1]$.  Note that if $K=1$, then $B$ is an ordinary fBm.  The reader may refer to \cite{Lei, RussoTudor06} for a discussion of properties.  

The covariance can be expressed in terms of $\phi$ with $\beta = HK$ and
\[\phi(x) = \frac{1}{2^K}\left[ (1+x^{2H})^K - (x-1)^{2HK}\right].\]
As stated in Section 1, it is well known that for ordinary fBm, $F_n(t)$ converges in distribution for all $H < 1-\frac{1}{2d}$.   Since the case $K=1$ is well known, we will assume below that $K < 1$.  We now verify the properties (H.1)  -- (H.2).
   
For (H.1), we write
\[ \phi(x) = -\frac{1}{2^K} (x-1)^{2HK} + \frac{1}{2^K}(1+x^{2H})^K,\]
which means we have $\lambda = 2^{-K}$ and $\alpha = 2\beta = 2HK$. Then
$ \psi(x) = 2^{-K}(1+x^{2H})^K$, and the bounds on $|\psi'|$ and $|\psi''|$ follow.   Moreover, $\psi'(1)= HK= \beta \psi(1)$.

For (H.2),
\[ \phi'(x) = 2^{1-K}HK\left( x^{2H-1}(1+x^{2H})^{K-1} - (x-1)^{2HK-1}\right).\]
We can write
\[ (1+x^{2H})^{K-1} = (x^{2H})^{K-1} + (K-1)\int_0^1 (x^{2H}+y)^{K-2} dy,\]
so that
\[ \phi'(x) = 2^{1-K}HK\left( x^{2HK-1}-(x-1)^{2HK-1} +(K-1)x^{2H-1}\int_0^1 (x^{2H}+y)^{K-2} dy\right).\]
Hence, we can write $| \phi'(x)| \le C(x-1)^{2HK-2} + C(x-1)^{2HK-2H-1}$.  If $\alpha < 1$, then we take $\nu = \min\left\{ 1+2H -2HK, 2-2HK\right\} > 1$, and if $\alpha \ge 1$ then $2-2HK \le 2+2H-2HK$ implies $|\phi'(x)| \le C(x-1)^{2HK-2}$.  Continuing, we have
\begin{multline*}
\phi''(x) = 2^{1-K}HK(2HK-1)\left( x^{2HK-2}-(x-1)^{2HK-2}\right)\\ +2^{2-K}H^2K(K-1)x^{2H-2}\int_0^1 (x^{2H}+y)^{K-2}dy\\
+2^{2-K}H^2K(K-1)(K-2)x^{4H-2}\int_0^1 (x^{2H}+y)^{K-3}dy,\end{multline*}
so it follows that
\[ |\phi''(x)| \le C(x-1)^{2HK-3} + C(x-1)^{2HK-2H-2} \le C(x-1)^{-M},\]
where
\[ M=\begin{cases} \min\{2+2H-2HK, 3-2HK\}=\nu+1&\text{ if  }\alpha < 1\\3-2HK&\text{ if  }\alpha \ge 1\end{cases}.\]

\subsection{Subfractional Brownian motion}
Another variant on the fBm is the process known as sub-fractional Brownian motion (sfBm).  This is a centered Gaussian process $\{ S_t, t\ge 0\}$, with covariance defined by:
\beq{Z_cov} R(s,t) = s^{2H} + t^{2H} -\frac 12 \left[ (s+t)^{2H} + |s-t|^{2H}\right],\eeq
with real parameter $H \in (0,1)$.  Some properties of the sfBm are discussed in \cite{Bojdecki, Chavez}.  Note that $H=1/2$ is a standard Brownian motion, and also note the similarity of $R(s,t)$ to the covariance of fBm. Indeed, in \cite{Chavez} it is shown that sfBm may be decomposed into an fBm with Hurst parameter $H$ and another centered Gaussian process. 

Let $S = \{ S_t, t\ge 0\}$ denote a sub-fractional Brownian motion with $0< H < 1$.  In this case we have $\lambda =\frac 12$, $\alpha = 2\beta = 2H$ and
\[\phi(x) = 1+x^{2H} -\frac 12\left( (x+1)^{2H} + (x-1)^{2H}\right)=-\lambda (x-1)^{2H} + \psi(x),
\]
where $\psi(x) = 1+x^{2H} -\frac 12(x+1)^{2H}$.  Clearly, $|\psi'(x)| \le Cx^{2H-1}$ and $|\psi''(x)| \le Cx^{2H-2}$.   Moreover, $\psi'(1)= H(2-2^{2H-1}) =\beta \psi(1)$.

To check (H.2), we have
\[ |\phi'(x)| = H\left| (x+1)^{2H-1} -2x^{2H-1} +(x-1)^{2H-1}\right|\le C(x-1)^{2H-2},\]
so (H.2)(i) is satisfied for $x\ge 2$, and (H.2)(ii) can be shown by a second derivative.

\subsection{A Gaussian process introduced by J. Swanson}
We consider the centered Gaussian process $Y = \{Y_t, t\ge 0\}$ with covariance given by
\[{\mathbb E}\left[ Y_sY_t\right] = \sqrt{st}~\sin^{-1}\left(\frac{s\wedge t}{\sqrt{st}}\right).\]
Then $Y$ can be characterized as a self-similar Gaussian process with $\beta = 1/2$ and \[\phi(x) = \sqrt x \sin^{-1}\left(\frac{1}{\sqrt x}\right).\]
This process was studied by Jason Swanson in a 2007 paper \cite{Swanson07}, and it arises as the limit of normalized empirical quantiles of a system of independent Brownian motions.  The properties of this process were also considered in \cite{HN_14}.
Unlike the fBm family processes in Sections 4.1 and 4.2, this process is an example of the case $\alpha < 2\beta$. 

This process satisfies (H.1), with $\beta=1/2$ and $\alpha=1/2$.  We can write this as
\[ \phi(x) = -\sqrt{x-1} + \left( \sqrt x \sin^{-1}\left(\frac{1}{\sqrt x}\right) + \sqrt{x-1}\right),\]
which gives $\lambda = 1$ and 
\[ \psi(x) =  \sqrt x \sin^{-1}\left(\frac{1}{\sqrt x}\right) + \sqrt{x-1}.\]
Then
\[ | \psi'(x)| = \frac{x^{-\frac 12}}{2}\left| \sin^{-1}\left(\frac{1}{\sqrt x}\right) - \frac{\sqrt x -1}{\sqrt{x-1}}\right| \le Cx^{-\frac 12}.\]
And for the second derivative,
\[
\psi''(x) = -\frac{x^{-\frac 32}}{4}\left(\sin^{-1}\left(\frac{1}{\sqrt x}\right) - \frac{\sqrt x -1}{\sqrt{x-1}}\right) -\frac{x^{-\frac 12}}{4\sqrt{x-1}}\left( \frac{1}{\sqrt x(\sqrt x +1)}\right),\]
so $|\psi''(x) |\le Cx^{-\frac 32}\left( 1+(x-1)^{-\frac 12}\right) \le Cx^{-1}(x-1)^{-\frac 12}$.

To check condition (H.2), we return to the original expression $\phi(x) = \sqrt x \sin^{-1}\left( x^{-\frac 12}\right)$.  We have
\begin{align*}
\phi'(x) &=  \frac{1}{2\sqrt x} \left( \sin^{-1}\left(\frac{1}{\sqrt x}\right) - \frac{1}{\sqrt{x-1}}\right)\\
&=  \frac{1}{2\sqrt x} \left( \frac{1}{\sqrt x} - \frac{1}{\sqrt{x-1}}\right)+\frac{1}{2\sqrt x} \left( \sin^{-1}\left(\frac{1}{\sqrt x}\right) - \frac{1}{\sqrt{x}}\right).\end{align*}
From a Taylor expansion of $\sin t$, we have for $0< t < 1$
\[ \sin t = t - \frac{t^3}{3!} + \frac{h^5}{5!},\]
for some $0 \le h \le t$, and it follows that
\[ t = \sin^{-1}\left( t - \frac{t^3}{3!} + \frac{h^5}{5!}\right).\]
We then  set $t = x^{-\frac 12},$ and use a second Taylor expansion on $\sin^{-1}$, to conclude that for $x>2$,
\[ \left| \frac{1}{\sqrt x} - \sin^{-1}\left(\frac{1}{\sqrt x}\right) \right| \le \frac{C}{x\sqrt{x-1}}.\]
Hence, we have
\[ | \phi'(x)| \le \frac{1}{2\sqrt x}\left| \frac{1}{\sqrt x} - \frac{1}{\sqrt{x-1}}\right| + \frac{C}{x^{\frac 32}\sqrt{x-1}} \le C(x-1)^{-2},\]
so we have $\nu = 2$ since $\alpha = \frac 12 < 1$.

Similarly,
\begin{align*}
\phi''(x) &= -\frac 14 x^{-\frac 32}\left( \sin^{-1}\left( \frac{1}{\sqrt x}\right) - \frac{1}{\sqrt{x-1}}\right) - \frac 14 x^{-\frac 12}\left( \frac{1}{x\sqrt{x-1}} - (x-1)^{-\frac 32}\right)\\
&=-\frac 14 x^{-\frac 32}\left( \sin^{-1}\left( \frac{1}{\sqrt x}\right) - \frac{1}{\sqrt{x-1}}\right) - \frac{1}{4\sqrt{x(x-1)}}\left( \frac 1x - \frac{1}{x-1}\right),
\end{align*} 
Hence, $| \phi''(x) | \le C(x-1)^{-3} = C(x-1)^{-\nu -1}$ for $x > 2$.

\subsection{Two smooth processes with $\alpha < 1$}
For $0 < \alpha < 1$, we consider the centered Gaussian processes $Z_1(t),~Z_2(t)$, with covariances given by:
\begin{align*}
\mathbb{E}\left[ Z_1(s)Z_1(t)\right] &= \Gamma(1-\alpha)\left( (s+t)^\alpha - \max(s,t)^\alpha\right)\\
\mathbb{E}\left[ Z_2(s)Z_2(t)\right] &= \Gamma(1-\alpha)\left( s^\alpha+t^\alpha - (s+t)^\alpha\right).
\end{align*}
These processes are discussed in a recent paper by Durieu and Wang \cite{DuWa}, where it is shown that the process $Z = Z_1 + Z_2$ (where $Z_1,~Z_2$ are independent) is the limit in law of a discrete process studied by Karlin.  The process $Z_1$ is new, but the process $Z_2$, with a different scaling constant, was first described in Lei and Nualart \cite{Lei}.

For $Z_1$, we can write $\mathbb{E}\left[ Z_1(s)Z_1(t)\right] = s^\alpha \phi_1(t/s)$, 
where $\phi_1(x) = \Gamma(1-\alpha)\left((x+1)^\alpha - x^\alpha\right)$.  We can also write $\phi_1$ in the form
\[ \phi_1(x) = -\Gamma(1-\alpha)(x-1)^\alpha + \Gamma(1-\alpha)\left((x+1)^\alpha+(x-1)^\alpha -x^\alpha\right),\]
so that $\lambda = \Gamma(1-\alpha)$ and $\psi_1(x) = \Gamma(1-\alpha)\left((x+1)^\alpha+(x-1)^\alpha-x^\alpha\right)$.  Note that by a Taylor expansion for $x \ge 1$,
\beq{Taylor_x-1} (x-1)^\alpha = x^\alpha -\alpha x^{\alpha-1} + \frac{\alpha(\alpha-1)}{2}x^{\alpha-2} + O(x^{\alpha-3}),\eeq
so that we have $|\psi_1'(x)| \le Cx^{\alpha-1}$ and $|\psi_2''(x)| \le Cx^{\alpha-2}$, and $Z_1$ satisfies (H.1).  For (H.2), note that we can write
\[ \phi_1(x) = \alpha\Gamma(1-\alpha)\int_0^1 (x+u)^{\alpha-1}du,\]
so for $x\geq 2$ 
\[ |\phi_1'(x)| = \alpha|\alpha-1|\Gamma(1-\alpha)\int_0^1(x+u)^{\alpha-2}du \le 2x^{\alpha-2},\]
which satisfies (H.2)(i) with $\nu = 2-\alpha >1$; and it can also be seen that $|\phi_1''(x)| \leq 4x^{\alpha-3} = 2x^{-\nu-1}$, so that (H.2)(ii) is satisfied.

\medskip
For $Z_2$ we have
\begin{align*} \phi_2(x) &= \Gamma(1-\alpha)(1 + x^\alpha - (x+1)^\alpha)\\
&=-\Gamma(1-\alpha)(x-1)^\alpha + \Gamma(1-\alpha)\left( 1+x^\alpha +(x-1)^\alpha - (x+1)^\alpha\right)\\ &= -\Gamma(1-\alpha) (x-1)^\alpha + \psi_2(x),
\end{align*}
where again we take $\lambda = \Gamma(1-\alpha)$.  By a computation similar to that for $Z_1$ above, it can be seen that $\psi_2$ satisfies (i) and (ii) of (H.1).  The computations for (H.2) are also similar to those for $Z_1$ above.  We write
\[ \phi_2(x) = \Gamma(1-\alpha) - \alpha\Gamma(1-\alpha)\int_0^1 (x+u)^{\alpha-1}du,\]
so that (H.2) conditions (i) and (ii) are satisfied with $\nu = 2-\alpha >1$.

\begin{remark}  Both processes satisfy $\mathbb{E}\left[ Z_i(t)^2\right] = Ct^\alpha$, so the increment exponent at 0 is $\alpha$.  On the other hand, for $t>0$, $\mathbb{E}\left[ (Z_i(t+s) - Z_i(t))^2\right] \approx c_t s$ for a constant $c_t$, and the increment exponent for $t >0$ is 1.  The renormalization in Theorem \ref{thm3} is given by $\alpha$, which is also the self-similarity parameter.
\end{remark}




\section{Some technical lemmas}

We begin with a technical lemma that gives upper bounds on certain covariance terms.  

\begin{lemma}  \label{lem5.1}
Let $n \ge 6$ be an integer, and let $j,k\ge 1$ be integers satisfying $3k \le j$.  Then under (H.2), there is a constant $C>0$ such that
\[ \left| {\mathbb E}\left[ \Delta X_{\frac jn} \Delta X_{\frac{k}{n}}\right] \right| \le \begin{cases} Cn^{-2\beta}k^{2\beta +\nu-2}(j-k)^{-\nu}&\text{ if  } \alpha < 1\\Cn^{-2\beta}k^{2\beta -\alpha}(j-k)^{\alpha-2}&\text{ if  } \alpha\ge 1\end{cases},\]
where the exponent $1 < \nu \le 2$ is defined in (H.2). 
\end{lemma}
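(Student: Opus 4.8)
The plan is to represent the covariance as a mixed second-order difference of $R$ and then reduce it to the derivative bounds in (H.2). Since $3k\le j$ and $k\ge 1$ force $j\ge k+2$, the intervals $[\tfrac kn,\tfrac{k+1}n]$ and $[\tfrac jn,\tfrac{j+1}n]$ are disjoint with the first lying strictly to the left, so on the rectangle $D=[\tfrac kn,\tfrac{k+1}n]\times[\tfrac jn,\tfrac{j+1}n]$ we always have $s<t$, and there $R(s,t)=s^{2\beta}\phi(t/s)$ is $C^2$ (the only singularity of $\phi$ is at $x=1$, while $t/s$ stays bounded away from $1$ on $D$). Applying the fundamental theorem of calculus in each variable, I would write
\[
{\mathbb E}\left[ \DXj\, \Delta X_{\frac kn}\right]
= \int_{k/n}^{(k+1)/n}\!\int_{j/n}^{(j+1)/n} \frac{\partial^2 R}{\partial s\,\partial t}(s,t)\,dt\,ds .
\]

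A direct differentiation of $R(s,t)=s^{2\beta}\phi(t/s)$ gives
\[
\frac{\partial^2 R}{\partial s\,\partial t}(s,t)
= (2\beta-1)\,s^{2\beta-2}\phi'\!\left(\tfrac ts\right)
 - t\,s^{2\beta-3}\phi''\!\left(\tfrac ts\right),
\]
so the next step is to bound the integrand. Writing $\tfrac ts-1=\tfrac{t-s}{s}$, the estimates $|\phi'(x)|\le C(x-1)^{-\nu}$, $|\phi''(x)|\le C(x-1)^{-\nu-1}$ in the case $\alpha<1$ (and $|\phi'(x)|\le C(x-1)^{\alpha-2}$, $|\phi''(x)|\le C(x-1)^{\alpha-3}$ when $\alpha\ge1$) turn the integrand into a product of powers of $s$ and of $t-s$. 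On $D$ one has the comparabilities $s\asymp\tfrac kn$ and $t\asymp\tfrac jn$, and — crucially using $j\ge3k$, which yields $j-k\ge2k$ and hence $j-k-1\ge\tfrac12(j-k)$ — also $t-s\asymp\tfrac{j-k}n$. These same inequalities give $\tfrac ts-1\ge\tfrac12$, so every argument of $\phi',\phi''$ that occurs lies in $[\tfrac32,\infty)$; since $\phi',\phi''$ are continuous on the compact set where $t/s\in[\tfrac32,2]$, the (H.2) bounds may be applied throughout (with a possibly larger constant).

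Substituting the comparabilities and multiplying by the area $n^{-2}$ of $D$ then produces the claimed powers. For the first term in the case $\alpha<1$ one gets the bound $n^{-2}\cdot(k/n)^{2\beta-2+\nu}\,((j-k)/n)^{-\nu}=Cn^{-2\beta}k^{2\beta+\nu-2}(j-k)^{-\nu}$, which is exactly the asserted estimate; the case $\alpha\ge1$ is identical, with the role of $-\nu$ played by $\alpha-2$ and that of $-\nu-1$ by $\alpha-3$. The one point needing care is the second term $t\,s^{2\beta-3}\phi''(t/s)$, whose extra factor $t\asymp j/n$ produces one extra power of $j$ and one more negative power of $t-s$ than the target; here I would absorb the discrepancy using $j\le\tfrac32(j-k)$ (again from $j\ge3k$), which converts $j\,(j-k)^{-\nu-1}$ into $O((j-k)^{-\nu})$ and $j\,(j-k)^{\alpha-3}$ into $O((j-k)^{\alpha-2})$, matching the stated bounds in both regimes.

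I expect the main obstacle to be organizational rather than conceptual: keeping the exponents of $n$, $k$, $j$ and $j-k$ consistent across the two terms of $\partial_s\partial_t R$ and the two regimes $\alpha<1$ and $\alpha\ge1$, and verifying at each stage that every argument fed into $\phi',\phi''$ genuinely stays in the region where (H.2), extended by continuity across $[\tfrac32,2]$, is valid — which is precisely what the hypothesis $3k\le j$ is designed to guarantee.
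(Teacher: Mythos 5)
Your argument is correct, and the key steps all check out: the representation $\mathbb{E}\bigl[\Delta X_{\frac jn}\Delta X_{\frac kn}\bigr]=\int_{k/n}^{(k+1)/n}\int_{j/n}^{(j+1)/n}\partial_s\partial_t R(s,t)\,dt\,ds$ is legitimate because $3k\le j$ and $k\ge 1$ force $t/s\ge \frac{j}{k+1}\ge\frac32$ on the rectangle, your formula $\partial_s\partial_t R=(2\beta-1)s^{2\beta-2}\phi'(t/s)-t\,s^{2\beta-3}\phi''(t/s)$ is right, and the comparabilities $s\asymp k/n$, $t\asymp j/n$, $t-s\asymp (j-k)/n$ together with $j\le\frac32(j-k)$ yield exactly the claimed exponents in both regimes. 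The paper, however, proceeds by a different, purely discrete route: it never differentiates $R$ jointly, but rearranges the covariance as $n^{-2\beta}\bigl((k+1)^{2\beta}-k^{2\beta}\bigr)$ times the first difference $\phi\bigl(\frac{j+1}{k+1}\bigr)-\phi\bigl(\frac{j}{k+1}\bigr)$, plus $n^{-2\beta}k^{2\beta}$ times the mixed second difference $\phi\bigl(\frac{j+1}{k+1}\bigr)-\phi\bigl(\frac{j}{k+1}\bigr)-\phi\bigl(\frac{j+1}{k}\bigr)+\phi\bigl(\frac{j}{k}\bigr)$, and then applies the Mean Value Theorem with the same (H.2) bounds; these two pieces are precisely the discrete counterparts of your $\phi'$ and $\phi''$ terms. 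Your route buys a single closed formula with pointwise estimates, and it makes explicit a point the paper glosses over: the arguments of $\phi'$, $\phi''$ can dip into $[\frac32,2)$, where (H.2) as stated does not apply, so the bounds must be extended with a larger constant (note, though, that twice differentiability of $\psi$ in (H.1) does not guarantee continuity of $\phi''$, so the extension is better justified by the boundedness of $\psi''$ coming from (H.1)(ii) than by "continuity on a compact set"). The paper's route buys slightly weaker regularity demands: the MVT needs only that $\phi$ be twice differentiable, whereas your double-integral representation additionally needs $\phi'$ to be locally absolutely continuous on $(1,\infty)$ --- harmless here, since $\phi''$ is locally bounded by (H.1), but it is a small extra technical debt worth acknowledging.
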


\begin{proof}



\medskip
We have
\begin{align*}
{\mathbb E}\left[ \Delta X_{\frac jn} \Delta X_{\frac{k}{n}}\right] &= n^{-2\beta}(k+1)^{2\beta}\left( \phi\left( \frac{j+1}{k+1}\right) - \phi\left( \frac{j}{k+1}\right)\right)\\
&\qquad\quad - n^{-2\beta}k^{2\beta}\left( \phi\left( \frac{j+1}{k}\right) - \phi\left( \frac{j}{k}\right)\right)\\
&= n^{-2\beta}\left((k+1)^{2\beta}-k^{2\beta}\right)\left( \phi\left( \frac{j+1}{k+1}\right) - \phi\left( \frac{j}{k+1}\right)\right)\\
&\qquad\quad +n^{-2\beta}k^{2\beta}\left[ \phi\left( \frac{j+1}{k+1}\right) - \phi\left( \frac{j}{k+1}\right) - \phi\left( \frac{j+1}{k}\right) + \phi\left( \frac{j}{k}\right)\right].
\end{align*}
Condition $j \ge 3k$ and (H.2) imply that there exists a constant $C>0$ such that for each $x\in\left [\frac j{k+1}, \frac {j+1} {k+1}\right]$
\[ 
\left| \phi' \left( x \right)\right| \le \begin{cases} Ck^{\nu}(j-k)^{-\nu}&\text{ if  } \alpha < 1\\Ck^{2-\alpha}(j-k)^{\alpha-2}&\text{ if  }\alpha \ge 1\end{cases}
\]
and  for each $x\in \left[ \frac j {k+1}, \frac {j+1} k \right]$,
\[ 
\left| \phi''\left( x \right)\right| \le \begin{cases} Ck^{\nu+1}(j-k)^{-\nu-1}&\text{ if  } \alpha < 1\\Ck^{3-\alpha}(j-k)^{\alpha-3}&\text{ if  }\alpha \ge 1\end{cases}.
\]
Then the desired estimate follows easily from the Mean Value Theorem.
\end{proof}

\begin{lemma} \label{lem5.2} Let $t>0$.  Under above definitions with $\alpha < 2-\frac 1q$, 
\[ 
\lim_{n\to\infty} {\mathbb E}\left[F_{n,q}^2(t)\right]=\sigma_q^2,
\]
 where $\sigma_q^2$ is given by \req{var_main}.
 \end{lemma}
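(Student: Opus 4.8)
The plan is to express the second moment as a normalized double sum of $q$-th powers of correlations between the standardized increments, and then to show it converges to the series (\ref{var_main}) by identifying the limiting correlation structure with that of fractional Brownian increments of Hurst index $\alpha/2$. Using the isometry (\ref{Hmap}) together with the Hermite covariance identity (\ref{Herm_cov}), I would first write
\[
{\mathbb E}\left[F_{n,q}^2(t)\right] = \frac{c_q^2\, q!}{n}\sum_{j,k=0}^{\Nt -1}\rho_{j,k}^{\,q},\qquad \rho_{j,k}=\frac{{\mathbb E}\left[\DXj\,\Delta X_{\frac kn}\right]}{\xi_{j,n}\,\xi_{k,n}},
\]
so that everything reduces to understanding the normalized correlations $\rho_{j,k}$ and their $q$-th powers.

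Next I would pin down the normalization: taking $t=j/n$ and $s=1/n$ in Lemma \ref{lem3.1} gives $\xi_{j,n}^2 = 2\lambda n^{-2\beta}j^{2\beta-\alpha}+g_1$, and by (\ref{eq1}) the remainder is of smaller order, so $\xi_{j,n}^2\sim 2\lambda n^{-2\beta}j^{2\beta-\alpha}$ with a uniformly controlled relative error. The core step is the diagonal-wise limit of the correlations: fixing $m\in{\mathbb Z}$ and setting $k=j-m$ with $j$ of order $n$, I would apply Lemma \ref{lem3.2}(b) to ${\mathbb E}[\DXj\,\Delta X_{\frac kn}]$ and divide by $\xi_{j,n}\xi_{k,n}$. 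The factors $\lambda$ and $(j/n)^{2\beta-\alpha}$ cancel against the normalization, the correction $g_3$ becomes negligible relative to the main term, and one obtains
\[
\rho_{j,j-m}\longrightarrow \tfrac12\left(|m+1|^\alpha+|m-1|^\alpha-2|m|^\alpha\right),
\]
the fBm-increment correlation with $2H=\alpha$; the cases $m=0$ (where $\rho_{j,j}=1$) and $m=\pm1$ (via Lemma \ref{lem3.2}(a)) give the same formula. Reindexing the double sum by $m=j-k$, the number of admissible $j$ is $\Nt-|m|$, whence $\frac1n\sum_j\rho_{j,j-m}^{\,q}\to t\left(\tfrac12(|m+1|^\alpha+|m-1|^\alpha-2|m|^\alpha)\right)^q$, and summing over $m$ recovers the series (\ref{var_main}), multiplied by the carried factors $c_q^2$ and $t$.

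The main obstacle is justifying the interchange of the limit in $n$ with the sum over $m$, and this is precisely where the hypothesis $\alpha<2-\frac1q$ enters. I would produce a summable dominating bound for $|\rho_{j,k}|^q$ by splitting the index set into a near-diagonal region, a moderate region where $j$ and $k$ are of the same order, and a far region where their ratio is large, following the decomposition used for the convergence (1.3) in \cite{DaNoNu}. In the moderate region the explicit leading term of Lemma \ref{lem3.2}(b), together with the Taylor estimate $|m+1|^\alpha+|m-1|^\alpha-2|m|^\alpha\sim\alpha(\alpha-1)|m|^{\alpha-2}$, yields $|\rho_{j,k}|\le C|j-k|^{\alpha-2}$, while in the far region ($j\ge 3k$) the same decay follows from Lemma \ref{lem5.1} after normalization. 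Raising to the $q$-th power gives $|\rho_{j,k}|^q\le C|m|^{q(\alpha-2)}$, and since $q(\alpha-2)<-1$ is equivalent to $\alpha<2-\frac1q$, the series $\sum_m|m|^{q(\alpha-2)}$ converges and dominates uniformly in $n$. Establishing this domination uniformly across all three regimes, and checking that the near-diagonal corrections do not accumulate, is the bookkeeping-heavy part of the argument; dominated convergence then delivers the stated limit.
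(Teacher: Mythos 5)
Your reduction and overall plan coincide with the paper's own proof: the identity \req{Herm_cov} turns ${\mathbb E}\left[F_{n,q}^2(t)\right]$ into $\frac{q!c_q^2}{n}\sum_{j,k}\rho_{j,k}^q$ with $\rho_{j,k}={\mathbb E}[\DXj\,\Delta X_{\frac kn}]/(\xi_{j,n}\xi_{k,n})$, Lemma \ref{lem3.1} gives the normalization, Lemma \ref{lem3.2} gives the fixed-lag limits, and the reindexing $m=j-k$ together with $q(\alpha-2)<-1$ produces \req{var_main}. The genuine gap is in your dominating bound. You assert that in the far region $j\ge 3k$ the estimate $|\rho_{j,k}|\le C|j-k|^{\alpha-2}$ ``follows from Lemma \ref{lem5.1} after normalization.'' It does not. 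For $\alpha<1$, Lemma \ref{lem5.1} gives $|{\mathbb E}[\DXj\,\Delta X_{\frac kn}]|\le Cn^{-2\beta}k^{2\beta+\nu-2}(j-k)^{-\nu}$, and since $\xi_{j,n}\xi_{k,n}$ is comparable to $n^{-2\beta}(jk)^{\beta-\frac\alpha2}$ uniformly in $j,k\ge1$, normalization yields $|\rho_{j,k}|\le Ck^{\beta+\frac\alpha2+\nu-2}(j-k)^{\frac\alpha2-\beta-\nu}$. When $\beta+\frac\alpha2+\nu<2$ this is not $O\left(|j-k|^{\alpha-2}\right)$: taking $k=1$ and $j=m+1$, the correlation decays only like $m^{\frac\alpha2-\beta-\nu}$, and $\frac\alpha2-\beta-\nu>\alpha-2$ exactly when $\beta+\frac\alpha2+\nu<2$. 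This regime is realized by processes the theorem covers: for bifractional Brownian motion with $H<\frac12$ and $K<1$ one has $\beta=HK$, $\alpha=2HK$, $\nu=1+2H-2HK$, so that $\beta+\frac\alpha2+\nu-2=2H-1<0$. Hence no dominating function of the form $C|m|^{q(\alpha-2)}$ exists, and dominated convergence, as you set it up, fails in the far region.

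The repair is exactly what the paper does (Step 4 of its proof of Lemma \ref{lem5.2}): do not dominate the far region termwise by a function of $j-k$; instead show its \emph{total} contribution vanishes. Using the correct normalized bound and $k\le j/3$, one gets $\frac1n\sum_{j}\sum_{k\le\lfloor j/3\rfloor}|\rho_{j,k}|^q\le\frac Cn\sum_j j^{q(\alpha-2)+1}\to0$, since $q(\alpha-2)+1<0$ and $q\nu>1$. Dominated convergence is then needed only in the moderate region $\lfloor j/3\rfloor<k\le j-2$, where your bound $C|j-k|^{\alpha-2}$ is indeed correct (the leading term of Lemma \ref{lem3.2}(b) plus the fact that $k^{-1}\le 3(j-k)^{-1}$ and $k^{\alpha-2}\le C(j-k)^{\alpha-2}$ there). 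A secondary inaccuracy: your claim that $\xi_{j,n}^2\sim2\lambda n^{-2\beta}j^{2\beta-\alpha}$ holds ``with a uniformly controlled relative error'' is false near $j=0$; by \req{eq1} the relative error is only $O(j^{-\varepsilon})$, so the rows $j\le n^\gamma$, $0<\gamma<\frac12$, must be split off and killed by Cauchy--Schwarz ($|\rho_{j,k}|\le1$, contribution $O(n^{2\gamma-1})$), as in Step 1 of the paper; this one is routine to patch, but the far-region domination is not.
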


\begin{proof}
The proof will be done in several steps.

\noindent{\it Step 1.} 
It follows from (\ref{Herm_cov}) that
\begin{align*}
{\mathbb E}\left[F_{n,q}^2(t)\right] &= \frac {c_q^2} {n}\sum_{j,k=0}^{\Nt -1} {\mathbb E}\left[ H_q \left(  Y_{j,n}\right)H_q \left( Y_{k,n}\right)\right]\\
&= \frac {q!c_q^2} {n}\sum_{j,k = 0}^{\Nt -1}  \| \Delta X_{\frac jn}\|^{-q}_{L^2(\Omega)} 
 \| \Delta X_{\frac kn}\|^{-q}_{L^2(\Omega)} 
\left( {\mathbb E}\left[ \DXj \Delta X_{\frac kn}\right]\right)^q.
\end{align*}
Fix $\gamma \in (0,1/2)$ and decompose the above double sum into two terms, that is,
${\mathbb E}\left[F_{n,q}^2\right]  =A_{1,n} + A_{2,n}$, where
\[
A_{i,n}  =\frac { q!c_q^2} {n} \sum_{j,k \in D_i}  \| \Delta X_{\frac jn}\|^{-q}_{L^2(\Omega)} 
 \| \Delta X_{\frac kn}\|^{-q}_{L^2(\Omega)} 
\left( {\mathbb E}\left[ \DXj \Delta X_{\frac kn}\right]\right)^q,
\]
and $D_1=\{ j :0\le j \le n^\gamma \wedge  (\Nt -1)\}$ and
$D_2=\{ j :n^\gamma <  j \le\Nt -1 \}$. 

The term $A_{1,n}$ can be bounded, using Cauchy-Schwarz inequality, by  $q! c_q^2n^{ 2\gamma -1}$ and it converges to zero as $n$ tends to infinity. So, it suffices to consider the term $A_{2,n}$. We recall the notation   $ \xi_{j,n}= \| \Delta X_{\frac jn}\|_{L^2(\Omega)} $.

\medskip
\noindent {\it Step 2.}  From (\ref{eq1}) we can write for $j\in D_2$
\begin{equation}  \label{eq5}
\xi_{j,n} ^2=2\lambda j^{2\beta-\alpha} n^{-2\beta} \left[ 1+ \eta_{j,n}\right],
\end{equation}
where
\begin{equation}  \label{eq4}
|\eta_{j,n}|= \frac 1{2\lambda} j^{\alpha-2\beta} n^{ -2\beta} \left|g_{1}\left( \frac jn, \frac 1n\right)\right|
\le Cn^{ -\gamma \varepsilon}.
\end{equation}
Consider the following decomposition:
\begin{align*}
 & A_{2,n} = \frac { q!c_q^2} {n} \sum_{j,k \in D_2} \xi_{j,n} ^{-q}  \xi_{k,n} ^{-q}\left( {\mathbb E}\left[ \Delta X_{\frac jn}\Delta X_{\frac kn}\right]\right)^q\\
&\qquad= q!  c_q^2 \frac { \Nt-1-n^{\gamma}}{n}  + \frac { q!c_q^2} {n}\sum_{j ,k\in D_2, |k-j|=1}  \xi_{j,n} ^{-q}  \xi_{k,n} ^{-q}  \left( {\mathbb E}\left[ \Delta X_{\frac jn}\Delta X_{\frac{k}{n}}\right]\right)^q \\
&\qquad +\frac { q!c_q^2} {n} \sum_{ j,k\in D_2,  |k-j|\ge 2}  \xi_{j,n} ^{-q}  \xi_{k,n} ^{-q}  \left( {\mathbb E}\left[ \Delta X_{\frac jn}\Delta X_{\frac kn}\right]\right)^q.
\end{align*}
The first term clearly converges to $q! c_q^2t$. We denote the second and third term by $B_{2,n}$ and $B_{3,n}$, respectively.

\medskip
\noindent {\it Step 3.}  Let us consider the term  $B_{2,n}$. 
Using  Lemma \ref{lem3.2}(a) we can write
\[
{\mathbb E}\left[ \Delta X_{\frac jn}\Delta X_{\frac{j-1}{n}}\right]
 = \lambda (2^\alpha -2) j^{2\beta-\alpha} n^{-2\beta} +  g_2\left(\frac jn, \frac 1n\right).
 \]
 Multiplying this expression by $\xi_{j,n} ^{-1} \xi_{j-1,n} ^{-1}$ and using (\ref{eq5}) yields
\begin{align}  \notag
 \xi_{j,n} ^{-1} \xi_{j-1,n} ^{-1}{\mathbb E}\left[ \Delta X_{\frac jn}\Delta X_{\frac{j-1}{n}}\right]
& = [1+ \eta_{j,n}]^{-\frac 12}  [1+ \eta_{j-1,n}]^{-\frac 12}  \\
&  \qquad   \times 
\left( (2^{\alpha-1} -1) \left(\frac j{j-1} \right)^{\frac 12(2\beta-\alpha)} 
+ R_{j,n}   \right),  \label{eq8} 
\end{align}
where
\[
R_{j,n}= (2\lambda)^{-1} n^{2\beta} (j(j-1))^{-\frac 12(2\beta-\alpha)}   g_2\left(\frac jn, \frac 1n\right).
\]
Applying Lemma \ref{lem3.2}(a) and assuming $n^\gamma \ge 2$,  this term can be bounded as follows.
\begin{equation} \label{eq6}
|R_{j,n}| \le C   j^{-(2\beta-\alpha +\delta)},   
\end{equation}
where $\delta = \min( 2-2\beta, 1+\alpha - 2\beta)$.  We claim that there exist $\rho \in (0,1)$ such that for $n$ large enough and for all  $j\in D_2$,
\begin{equation} \label{eq7}
\left| \xi_{j,n} ^{-1} \xi_{j-1,n} ^{-1}{\mathbb E}\left[ \Delta X_{\frac jn}\Delta X_{\frac{j-1}{n}}\right]
 \right| <\rho <1.
 \end{equation}
 This follows from the estimates (\ref{eq4}),  (\ref{eq6}) and the fact that  $|2^{\alpha-1} -1|<1$ and 
$2\beta-\alpha +\delta>0$. Finally,  from the expression (\ref{eq8}) and the estimates (\ref{eq4}) and (\ref{eq6})  it follows that
\begin{align}  \notag
\lim_{n\to \infty}  B_{2,n}&= q! c_q^2 2
\lim_{n\to \infty}   \frac 1n   \sum_{j,j-1 \in D_2}
[1+ \eta_{j,n}]^{-\frac q2}  [1+ \eta_{j-1,n}]^{-\frac q2}   \\
&\qquad \times   \notag
\left( (2^{\alpha-1} -1) \left(\frac j{j-1} \right)^{\frac 12(2\beta-\alpha)} 
+ R_{j,n}   \right)^q \\   \label{eq9}
&=q!  c_q^2 2 (2^{\alpha -1} -1)^q  t.
\end{align}

\medskip
\noindent {\it Step 4.}  Let us consider the term  $B_{3,n}$.   First we show that the terms with
$ k\le  \lfloor j/3\rfloor $ or  $ j\le  \lfloor k/3\rfloor $   do not contribute to the limit.  That is, we claim that the following expression converges to zero as $n$ tends to infinity
\[
C_{n}= \frac 1n \sum_{j=3}^{\Nt -1}\sum_{k=1}^{\left\lfloor \frac j3 \right\rfloor}  \xi_{j,n}^{-q} \xi_{k,n}^{-q}
\left|{\mathbb E}\left[ \Delta X_{\frac jn}\Delta X_{\frac kn}\right]\right|^q.
\]
To do this, we consider two cases. 

{\it Case 1.}
  When $\alpha < 1$, Lemma \ref{lem5.1} gives
\[
C_n \le  Cn^{-1}\sum_{j=3}^{\Nt -1}\sum_{k=1}^{\left\lfloor \frac j3\right\rfloor} 
k^{q(\alpha+\nu-2)}(j-k)^{-q\nu}.
\]
Note that $1 \le k \le j/3$ implies $(j-k)^{-\nu} \le Cj^{-\nu}$, so it follows that
\[
C_n \le  \frac Cn \sum_{j=3}^{\Nt -1}j^{-q\nu}\sum_{k=1}^{\left\lfloor \frac j3 \right\rfloor} k^{q(\alpha +\nu-2)} \le  \frac Cn \sum_{j=3}^{\Nt -1}j^{q(\alpha-2)+1}
\]
which converges to zero as $n$ tends to infinity because 
$q(\alpha-2)+1<0$ since $\alpha < 2-\frac 1d$ and $q\ge d$.

{\em Case 2:  Assume $1 \le \alpha < 2-\frac 1q$}.  For this case, by the other part of Lemma \ref{lem5.1},
\[
C_n \le  \frac Cn \sum_{j=3}^{\Nt -1}\sum_{k=1}^{\left\lfloor \frac j3\right\rfloor} 
(j-k)^{q(\alpha -2)}.
\]
Again, since $1\le k \le j/3$, we can say that $(j-k)^{\alpha -2} \le Cj^{\alpha -2}$, where $C$ is a constant that does not depend on $j$ or $k$.  Thus, we have
\[
C_n \le    \frac Cn \sum_{j=3}^{\Nt -1}j^{q(\alpha-2)+1},
\]
which converges to zero as $n$ tends to infinity because
$q(\alpha-2)+1<0$ since $\alpha < 2-\frac 1d$ and $q\ge d$.

 \medskip
\noindent {\it Step 5.} 
Finally, it remains to study the following term 
\[
 D_n=\frac {2q!c_q^2} n  \sum_{ j,k \in D_2, \lfloor j/3\rfloor   <k \le j-2 }
 \xi_{j,n}^{-q} \xi_{k,n}^{-q}
\left({\mathbb E}\left[ \Delta X_{\frac jn}\Delta X_{\frac kn}\right]\right)^q.
\]
We have by Lemma \ref{lem3.2}(b),
\[
{\mathbb E}\left[ \Delta X_{\frac jn}\Delta X_{\frac kn}\right]
=\lambda n^{-2\beta}k^{2\beta-\alpha}A_{j,k} + g_3\left(\frac {k+1}n, \frac {j+1}n, \frac 1n\right),
\]
where
\[ 
A_{j,k} =   |j-k+1|^\alpha +|j-k-1|^\alpha- 2|j-k|^\alpha.
\]
Multiplying this expression by  $\xi_{j,n}^{-1} \xi_{k,n}^{-1} $ and using (\ref{eq5}) yields
\begin{align*}
\xi_{j,n}^{-1} \xi_{k,n}^{-1} \left[ \lambda n^{-2\beta}k^{2\beta -\alpha}A_{j,k} + g_3\left( \frac {k+1}n, \frac {j+1}n, \frac 1n\right)\right]
&=[1+ \eta_{j,n}]^{-\frac 12}  [1+ \eta_{k,n}]^{-\frac 12}  \\
& \quad \times
\left(  2^{-1} \left(  k/j \right)^{\frac 12(2\beta -\alpha)}   A_{j,k} +  R_{j,k,n} \right),
\end{align*}
where
\[
R_{j,k,n}=(2\lambda)^{-1} n^{2\beta} (kj)^{-\frac 12 (2\beta-\alpha)} g_3\left( \frac {k+1}n, \frac { j+1}n, \frac 1n\right).
\]
We can write
\begin{align*}
\left(  2^{-1}(k/j)^{\frac 12(2\beta -\alpha)}  A_{j,k} +  R_{j,k,n} \right)^q &=
2^{-q} (k/j)^{\frac q2(2\beta -\alpha)}    A^q_{j,k} \\
&\qquad +\sum_{r=1}^q \binom{q}r 2^{-(q-r)} (k/j)^{\frac {q-r}2(2\beta -\alpha)}   A^{q-r}_{j,k}  R_{j,k,n}^r\\
& =: \Phi_{1,j,k,n} + \Phi_{2,j,k,n}.
\end{align*}
From the estimate for $|g_3|$ from Lemma \ref{lem3.2}(b), we have
\[
| R_{j,k,n}|  \le  C j^{-\frac 12(2\beta-\alpha)}   k^{\frac 12 (2\beta-\alpha)-1} (j-k-1)^{\alpha-1}  
 + Cj^{-\frac 12(2\beta-\alpha)} k^{\frac 12 (2\beta-\alpha)+(\alpha -2)}.\]
It follows that we  can write 
\[
|\Phi_{2,j,k,n}| \le  \sum_{r=1}^q \binom{q}r  C^r  2^{-(q-r)} A_{j,k} ^{q-r} (k/j)^{\frac  q2(2\beta -\alpha)}       \left( k^{-r} (j-k-1)^{r(\alpha-1)}+k^{r(\alpha-2)}\right).
\]
Using that $k/j \le 1$,    $\left| A_{j,k}\right| \le C|j-k-1|^{\alpha -2}$ for $|j-k| \ge 2$,  
$k^{-r} \le C ^rj^{-r} \le  C^r(j-k-1)^{-r}$, and similarly that $k^{\alpha-2}\le C(j-k-1)^{\alpha-2}$, we obtain
\[
|\Phi_{2,j,k,n}| \le C^q            (j-k-1)^{q(\alpha-2)}
\]
for some constant $C>1$.
This implies
\[
\frac {2q!c_q^2} n \sum_{j=\lfloor n^\gamma \rfloor}^{\Nt -1}\sum_{k=\lfloor j/3\rfloor +1}^{j-2} [1+ \eta_{j,n}]^{-\frac q2}  [1+ \eta_{k,n}]^{-\frac q2}  |\Phi_{2,j,k,n}| \le
\frac {2q!c_q^2} n C^q \sum_{j=\lfloor n^\gamma \rfloor}^{\Nt -1}      j^{q(\alpha-2)+1}.
\]
which converges to zero as $n$ tends to infinity. Moreover,  there exists $\rho\in (0,1)$ such that
\begin{equation} \label{eq12}
\frac 1 n C^q \sum_{j=\lfloor n^\gamma \rfloor}^{\Nt -1}      j^{q(\alpha-2)+1} \le \rho
\end{equation}
for $n$ large enough.

 Therefore,
\[
\lim_{n\rightarrow \infty} D_n= q! c_q^2
\lim_{n\rightarrow \infty}  \frac 2n 
 \sum_{  j,k \in D_2, \lfloor j/3\rfloor   <k \le j-2 }
 2^{-q} (k/j)^{\frac q2(2\beta -\alpha)}    A_{j,k}^q.
  \]
Since we know from Step 1 and   Step 4 that the terms with $j,k\in D_1$ and  $ k\le  \lfloor j/3\rfloor $ do not contribute to the limit, we can add these terms and write
\[
\lim_{n\rightarrow \infty} D_n= q! c_q^2
\lim_{n\rightarrow \infty}  \frac 2n 
\sum_{j=3}^{\Nt -1}\sum_{k= 1}^{j-2}
 2^{-q} (k/j)^{\frac q2(2\beta -\alpha)}   A_{j,k}^q.
  \]
We use the change of index $m = j-k$ to write
\begin{equation}   \label{eq15}
\lim_{n\rightarrow \infty} D_n= q! c_q^2
\lim_{n\rightarrow \infty}  \frac 2n 
\sum_{k=1}^{\Nt -1} \sum_{m=2}^{\Nt-k-1}
 2^{-q} \left (\frac k{k+m} \right)^{\frac q2(2\beta -\alpha)} \left(   (m+1)^\alpha +(m-1)^\alpha- 2m^\alpha \right)^q,
  \end{equation} 
which leads to
\begin{equation}   \label{eq3}
\lim_{n\rightarrow \infty} D_n= 2^{1-q} q!  c_q^2t 
\sum_{m=2}^\infty \left(   (m+1)^\alpha +(m-1)^\alpha- 2m^\alpha \right)^q.
\end{equation}
Thus, the limit \req{var_main} follows from results \req{eq9} and \req{eq3}, and Lemma \ref{lem5.2} is proved.

\end{proof}
 
\begin{remark}  Notice  that the right-hand side of (\ref{eq15}) is bounded by
\[
2q! c_q^2
t\sum_{m=2}^{\infty}
 2^{-d}  \left(   (m+1)^\alpha +(m-1)^\alpha- 2m^\alpha \right)^d  \rho_m^{q-d},
 \]
 where $\sup_{m\ge 2}\rho_m <1$.
\end{remark} 
 
\small


\begin{thebibliography}{7}

\bibitem{Billingsley}
P. Billingsley.  {\em Convergence of Probability Measures}, $2^{nd}$ Ed.  New York: Wiley, 1999. 

\bibitem{Bojdecki}
T. Bojdecki, L. Gorostiza and A. Talarczyk (2004).  Sub-fractional Brownian motion and its relation to occupation times.   
{\em Statist. Probab. Letters} 69: 405-419.

\bibitem{BreuerMajor}
P. Breuer and P. Major (1983). Central limit theorems for nonlinear functionals of Gaussian fields.  
{\em J. Multivariate Analysis} 13(3), 425-441.

\bibitem{Chavez}
J. Ruiz de Chavez and C. Tudor (2009).  A decomposition of sub-fractional Brownian motion.   
{\em Math. Reports} 11(61): 67-74.

\bibitem{DaNoNu}
S. Darses, I. Nourdin and D. Nualart (2010).  Limit theorems for nonlinear functionals of Volterra processes via white noise analysis.  {\em Bernoulli} 16(4):  1262-1293.

\bibitem{DuWa}
O. Durieu and Y. Wang (2015).  From infinite urn schemes to decompositions of self-similar Gaussian processes.  {\em Preprint}.

\bibitem{HN_14}
D. Harnett and D. Nualart (2013).  Central limit theorem for a Stratonovich integral with Malliavin calculus.   {\em Ann. Probab.} 41(4): 2820-2879.

\bibitem{Houdre}
C. Houdr\'e and J. Villa (2003).  An example of infinite dimensional quasi-helix.  
{\em Contemp. Math.} 336: 3-39.

\bibitem{HuNu}
Y. Hu and D. Nualart (2010).
Parameter estimation for fractional Ornstein-Uhlenbeck processes.
{\em Statist. Probab. Lett.} 80:  1030-1038.

\bibitem{Lei}
P. Lei and D. Nualart (2008).  
A decomposition of the bifractional Brownian motion and some applications.   
{\em Statist. Probab. Lett.} 10.1016.

\bibitem{NoNu}
I. Nourdin and D. Nualart (2010).  
Central limit theorems for multiple Skorokhod integrals.  
{\em J. Theor. Probab.} 23: 39-64.

\bibitem{NoP09}
I. Nourdin and G. Peccati (2009).  Stein's method on Wiener chaos.  {\em Probab. Theory. Relat. Fields} 145(1), 75-118.

\bibitem{NoP11}
I. Nourdin and G. Peccati. {\em  Normal approximations with Malliavin calculus: from Stein's method to universality}. Cambridge U. Press, 2012.


\bibitem{Nualart}
D. Nualart.
{\em The Malliavin Calculus and Related Topics}, $2^{nd}$ Ed. Berlin: Springer-Verlag, 2006.

\bibitem{NOrtiz}
D. Nualart and S. Ortiz-Latorre (2008). Central limit theorems for multiple stochastic integrals and Malliavin calculus.
{\em Stoch. Proc. Appl.} 118(4): 614-628. MR2394845

\bibitem{NP05}
D. Nualart and G. Peccati (2005). Central limit theorems for sequences of multiple stochastic integrals.
{\em Ann. Probab.} 33(1): 177-193. MR2118863

\bibitem{PT}
G. Peccati and C.A. Tudor.  Gaussian limits for vector-valued multiple stochastic integrals. {\em S\'eminaire de Probabilit\'es XXXVIII}.  Lecture Notes in Mathematics 1857.  Berlin:  Springer-Verlag, 2005.

\bibitem{RussoTudor06}
F. Russo and C.A. Tudor (2006).  On bifractional Brownian motion.  {\em Stoch. Proc. Appl.} 116 (5):  830-856.

\bibitem{Stroock}
D.W. Stroock.  Homogeneous chaos revisited.  {\em S\'eminaire de Probabilit\'es XXI}. Lecture Notes in Mathematics 1247.  Berlin:  Springer-Verlag, 1987.


\bibitem{Swanson07}
J. Swanson (2007).  
Weak convergence of the scaled median of independent Brownian motions.  
{\em Probab. Theory Relat. Fields} 138(1-2):  269-304.



\end{thebibliography}
\end{document}